\newtheorem{lem}{Lemma}
\newtheorem{thm}{Theorem}
\newtheorem{cor}{Corollary}[thm]
\newtheorem{prop}{Proposition}
\theoremstyle{definition}
\theoremstyle{remark}
\begin{document}

\title[Knots with meridional essential surfaces of arbitrarily high genus]{Prime Knot complements with meridional essential surfaces of arbitrarily high genus}

\author{Jo\~{a}o Miguel Nogueira}
\address{CMUC, Department of Mathematics, University of Coimbra, Apartado 3008, 3001-454 Coimbra, Portugal\\
nogueira@mat.uc.pt}
\thanks{This work was partially supported by the Centro de Matem\'{a}tica da
Universidade de Coimbra (CMUC), funded by the European Regional
Development Fund through the program COMPETE and by the Portuguese
Government through the FCT - Funda\c{c}\~{a}o para a Ci\^{e}ncia e a Tecnologia
under the project PEst-C/MAT/UI0324/2011. This work was also partially suported by the UTAustin$|$Portugal program CoLab.}

\keywords{Essential surface, meridional surface, unbounded genus}

\subjclass[2010]{57M25, 57N10}

\maketitle

\begin{abstract}
We show the existence of infinitely many prime knots each of which having in their complements meridional essential surfaces with two boundary components and arbitrarily high genus.

\end{abstract}

\section{Introduction}
Since the work of Haken and Waldhausen, it is common to study $3$-manifolds, as knot complements, by their decomposition along surfaces into submanifolds. A very important class of surfaces used in these decompositions are the essential surfaces, which has motivated research on the properties and existence of closed essential surfaces or meridional essential surfaces in knot complements in $S^3$. A particularly interesting phenomena is the existence of knots with the property that their complements have closed essential surfaces of arbitrarily high genus. The first examples of knots with this property were given by Lyon \cite{Lyon}, where he proves the existence of fibered knots complements with closed essential surfaces. Later Oertel \cite{Oertel} and recently Li \cite{Li} also give examples of knots having closed essential surfaces of arbitrarily high genus in their complements. Oertel uses the planar surfaces from the tangles defining the Montesino knots to construct and characterize the essential surfaces. Lyon and Li use connected sum of knots on their constructions and afterwards sattelite knots to obtain primeness of the desired examples.\\
In this paper we consider meridional surfaces instead, and prove that there is also no general bound for the genus of meridional essential surfaces in the complements of (prime) knots. In fact, we construct prime knots each of which with meridional essential surfaces in their complements having only two boundary components and arbitrarily high genus. Then, in particular, we prove that some prime knots have the property that they can be decomposed by surfaces of all positive genus as composite knots are decomposed by spheres. The results of this paper are summarized in the following theorem and its corollary.

\begin{thm}\label{prime}
There are infinitely many prime knots each of which having the property that its complement has a meridional essential surface of genus $g$ and two boundary components for all positive $g$.
\end{thm}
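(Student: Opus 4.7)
The plan is to build each of the required knots as a gluing of two tangles $(B_1,T_1)$ and $(B_2,T_2)$ along their boundary spheres, and to realize every genus $g\ge 1$ inside a single knot complement as a union of planar essential surfaces, one piece from each tangle. Concretely, I would choose the tangles so that, for every $n\ge 1$, each $(B_i,T_i)$ contains a planar essential surface $P_n^{(i)}$ meeting exactly one strand of $T_i$ transversely in a single point and meeting $\partial B_i$ in $n$ parallel circles, with the pattern on $\partial B_1$ matching the pattern on $\partial B_2$ under the gluing. Then the closed-up surface $F_n:=P_n^{(1)}\cup P_n^{(2)}$ has Euler characteristic $\chi(F_n)=2(1-n)=2-2n$ and exactly two meridional boundary components on the resulting knot $K$, hence genus $n$. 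Letting $n$ run over $\mathbb{N}$ supplies $K$ with meridional surfaces of every positive genus and two boundary components at once.

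Next I would verify that each $F_n$ is essential in $S^3\setminus K$. The argument is the standard innermost/outermost disc reduction: given a compressing or boundary-compressing disc $D$ for $F_n$, one puts it in general position with the tangle sphere $S=\partial B_1=\partial B_2$, minimizes $|D\cap S|$, and uses an innermost subdisc to produce a compressing or boundary-compressing disc for $P_n^{(i)}$ inside $(B_i,T_i)$. Thus essentialness of $F_n$ reduces to essentialness of each planar piece $P_n^{(i)}$ in its tangle, which should be checked directly from the explicit tangle picture (together with the fact that $S$ is itself essential in $S^3\setminus K$, to be ensured by the choice of tangles).

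For primeness of $K$, I would show that no meridional $2$-sphere in $S^3\setminus K$ is essential: applying the same intersection-minimization with $S$, any such hypothetical sphere would yield a disc in some $(B_i,T_i)$ separating its two strands, which is excluded if the tangles are chosen to be \emph{prime} in the sense of Kirby--Lickorish. Finally, to produce \emph{infinitely many} such prime knots, I would embed a free parameter $k\in\mathbb{Z}$ into the construction, for instance a block of $k$ full twists in a sub-ball of $B_1$ disjoint from every $P_n^{(1)}$ and from the boundary circles on $\partial B_1$ taking part in the gluing. The resulting family $\{K_k\}$ retains the sequence of surfaces $\{F_n\}$ verbatim; distinctness of the knots can be forced (and verified) by an additional invariant, e.g., hyperbolic volume or bridge number growing with $|k|$.

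The main obstacle is the uniformity in $n$: a \emph{single} tangle decomposition has to carry an entire infinite family of essential planar surfaces with arbitrarily many parallel boundary circles, and the innermost-disc reduction must work for all $n$ at once. In particular, the tangles must be designed so that the complementary pieces of $P_n^{(i)}$ in $B_i$ do not admit compressions that persist as $n$ grows, and so that the gluing homeomorphism identifies the $n$ circles of $\partial P_n^{(1)}$ on $\partial B_1$ with those of $\partial P_n^{(2)}$ on $\partial B_2$ for every $n$ simultaneously. A secondary, but more routine, difficulty is distinguishing the members of the twist family while certifying that every member is still prime and still carries the full tower of surfaces $\{F_n\}_{n\ge 1}$.
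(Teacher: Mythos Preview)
Your strategy is coherent, but as written it is an outline rather than a proof, and the gap is precisely the one you flag at the end. You propose to decompose the knot along a Conway sphere $S$ and to find, in each prime tangle $(B_i,T_i)$, an infinite tower of essential planar pieces $P_n^{(i)}$ whose boundary patterns on $S$ match for all $n$. Nothing in the proposal constructs such tangles, and this is not a detail one can defer: producing a single tangle that carries essential planar surfaces with arbitrarily many parallel boundary curves on the $4$--punctured sphere, while remaining prime and while leaving room for a twist region disjoint from \emph{every} $P_n^{(1)}$, is the entire content of the theorem. The innermost/outermost reduction you describe is correct in form, but it only reduces essentiality of $F_n$ to essentiality (and $\partial$--incompressibility toward $S$) of each $P_n^{(i)}$; without an explicit tangle in hand you cannot verify these hypotheses, and there is no general principle guaranteeing that a prime tangle admits such a family.

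The paper avoids this difficulty by changing the decomposing surface from a sphere to a torus. The knots $K_i$ are built from two solid tori $H,H_i$ glued along a genus--one Heegaard surface, each containing an essential arc; the building blocks for $F_g$ are then twice--punctured \emph{tori} $X,Y$ (coming from $\partial T,\partial T_i$) together with annuli, rather than planar pieces. For $g=1,2$ the paper argues directly by innermost curves, but for $g\ge 3$ it does \emph{not} attempt a uniform innermost argument: instead it builds a single incompressible branched surface $B$ in $E(K_i)$ and observes that every $F_g$ is carried by $B$ with positive weights, so the Floyd--Oertel theorem gives essentiality of all $F_g$ at once. This is exactly the device that handles your ``uniformity in $n$'' obstacle. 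Primeness and the infinite family are obtained not by a twist parameter but by inserting a companion knot $L_i$ into a fixed sub-ball, so that $K_i$ is a satellite with pattern independent of $i$; distinctness then follows from JSJ uniqueness. If you want to pursue your sphere--based approach, you would need either an explicit tangle construction with the required towers of planar surfaces, or a branched--surface argument inside each tangle that plays the role the paper's branched surface plays across the Heegaard torus.
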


\begin{cor}\label{non prime}
There are infinitely many knots each of which having the property that its complement has a meridional essential surface of genus $g$ and two boundary components for all $g\geq 0$.
\end{cor}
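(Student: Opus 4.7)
The plan is to produce the required knots by forming connected sums of the prime knots from Theorem~\ref{prime} with nontrivial knots, thereby introducing a genus-$0$ meridional essential surface while keeping the higher-genus ones. Fix any prime knot $K$ from Theorem~\ref{prime} and let $J$ be a nontrivial knot (say, the trefoil); set $K':=K\# J$.

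The case $g=0$ is produced by the connected-sum decomposition itself. The decomposing $2$-sphere $S$ meets $K'$ transversally in two points, so $A:=S\cap(S^3\setminus\nu(K'))$ is a meridional surface of genus $0$ with two boundary components. Because both summands are nontrivial, $A$ is an essential annulus in $S^3\setminus\nu(K')$, giving the desired surface for $g=0$.

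For each $g\geq 1$, I would transport into $S^3\setminus\nu(K')$ the essential meridional surface $F_g$ of genus $g$ and two boundary components supplied by Theorem~\ref{prime} for $S^3\setminus\nu(K)$. The $3$-ball $B$ used to form $K\# J$ can be chosen to meet $K$ in a short unknotted subarc that, after a small isotopy of $F_g$ along $\partial\nu(K)$, is disjoint from $F_g$ and from the meridional annuli cobounded by $\partial F_g$ on $\partial\nu(K)$. Then $F_g$ sits naturally inside the side of $A$ identified with $(S^3\setminus\nu(K))\setminus B$. To verify that $F_g$ remains essential in $S^3\setminus\nu(K')$, suppose a compressing or $\partial$-compressing disk $D$ existed there, and minimize $|D\cap A|$. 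Since $A$ is an essential annulus, a standard innermost-disk / outermost-arc argument removes all intersections, placing $D$ entirely on the $K$-side; this contradicts the essentiality of $F_g$ in $S^3\setminus\nu(K)$. This cut-and-paste step is the main technical point, but it is routine once the essentiality of $A$ is in hand.

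Infinitely many such composite knots are then produced by varying $K$ over the infinite family from Theorem~\ref{prime} (keeping $J$ fixed); distinct choices of $K$ yield distinct composite knots $K\# J$ by the uniqueness of prime decomposition of knots. This gives the desired infinite family of knots whose complements contain a meridional essential surface of genus $g$ with exactly two boundary components for every $g\geq 0$.
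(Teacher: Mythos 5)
Your proposal is correct and follows essentially the same route as the paper: the paper's proof likewise takes the connected sum of the prime knots $K_i$ from Theorem~\ref{prime} with another (nontrivial) knot, obtaining the genus-$0$ surface from the decomposing sphere and retaining the surfaces $F_g$ for $g\geq 1$. You simply spell out the (standard) cut-and-paste verification that the $F_g$ remain essential and the uniqueness-of-factorization argument for distinctness, which the paper leaves implicit.
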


\noindent From \cite{CGLS}, at least one of the swallow-follow surfaces obtained from the meridional essential surfaces in Theorem \ref{prime} is also essential and of higher genus. Hence, the knots from the theorem are also examples of knots having closed essential surfaces of arbitrarily high genus in their complements.\\   
Together with the literature already cited, this paper joins several other contributions to unserstand better knots with respect to the existence of closed or meridional essential surfaces in their complements: In \cite{Menasco} Menasco studies essential surfaces in alternating links complements; Finkelstein-Moriah \cite{Finkelstein-Moriah} and Lustig-Moriah \cite{Lustig-Moriah} prove the existence of meridional essential and closed essential surfaces, respectively, for a large class of links characterized by a certain $2n$-plat projection; There is also the work of Finkelstein \cite{Finkelstein} and Lozano-Przytycki \cite{Lozano} describing closed incompressible surfaces in closed $3$-braids; More recently, after Gordon-Reid \cite{Gordon-Reid} proved that tunnel number one knots have no meridional planar essential surface in their complements, Eudave-Muñoz \cite{Munoz1}, \cite{Munoz2} proved that some of these knots actually have meridional or closed essential surfaces in their complements.\\
The proof of Theorem \ref{prime} follows a similar philosophy as in Lyon's paper \cite{Lyon}, where he uses the connected sum of two knots and essential surfaces in their exteriors. We could follow the same construction if our aim was only to construct a knot exterior with merdidional essential surfaces of arbitrarily high genus. However, as we want the surfaces to have two boundary components we cannot use the sattelite construction to obtain primeness of the knot. As we also want the knots to be prime we cannot use a connected sum as the base for the construction. So, instead of using composite knots we consider a decomposition of prime knots along certain essential tori separating the knot into two arcs.
The main techniques for the proof are classical in $3$-manifold topology, as innermost curve arguments and  branched surface theory. The reference used for standard definitions and notation in knot theory is Rolfsen's book \cite{Rolfsen}. Throughout this paper all submanifolds are assumed to be in general position and we work in the piecewise linear category.

\section{Construction of the knots}

In our construction we use \textit{$2$-string essential free tangles}, that we define as follows:
A \textit{$n$-string tangle} is a pair $(B, \sigma)$ where $B$ is a $3$-ball and $\sigma$ is a collection of $n$ properly embedded disjoint arcs in $B$. We say that $(B, \sigma)$ is \textit{essential} if for every disk $D$ properly embedded in $B-\sigma$ then $\partial D$ bounds a disk in $\partial B-\partial \sigma$. The tangle is said to be \textit{free} if the fundamental group of $B-\sigma$ is free, or, equivalently, if the closure of $B-N(\sigma)$ is a handlebody.\\
Let $H$ be a solid torus and $\gamma$ an embedded graph in $H$, as in Figure \ref{Gamma}.
\begin{figure}[htbp]
\labellist
\small \hair 2pt
\pinlabel (a) at 0 0
\scriptsize
\pinlabel $\gamma$ at 5 17
\pinlabel $a_1$ at 23 70
\pinlabel $a_2$ at 23 14

\small
\pinlabel (b) at 118 0
\scriptsize
\pinlabel $\gamma$ at 125 39.5
\pinlabel $H$ at 150 9
\pinlabel $D_H$ at 186.3 39.5
\pinlabel $B_H$ at 215 30
\endlabellist
\centering
\includegraphics{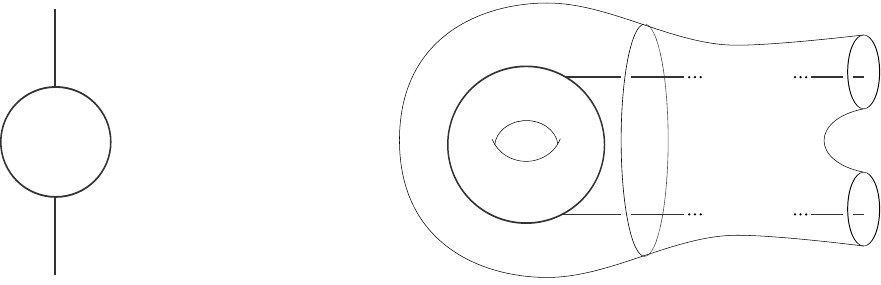}
\caption{: The graph $\gamma$, in (a), and its embedding into the solid torus $H$, in (b).}
\label{Gamma}
\end{figure}
The graph $\gamma$ is topologically a circle connected to two segments, $a_1$ and $a_2$, at a boundary point of each. The other two boundary points of $a_1\cup a_2$ are in $\partial H$. There is a separating disk $D_H$ in $H$ intersecting $\gamma$ transversely at a point of each segment $a_1$ and $a_2$, and decomposing $H$ into a solid torus and a $3$-ball $B_H$ where $(B_H, B_H\cap \gamma)$ is a $2$-string essential free tangle\footnote{See the Appendix, section \ref{appendix}, for an example of a $2$-string essential free tangle with both strings knotted.} with $B_H\cap \gamma$ two knotted arcs in $B_H$. (See Figure \ref{Gamma}(b).)\\
Denote by $T$ a regular neighborhood of $\gamma$ in $H$ and suppose there is a properly embedded arc $s$ in $T$, as in Figure \ref{Gammai}(a), with the boundary of $s$ in $T\cap \partial H$.
\begin{figure}[htbp]
\labellist
\small \hair 2pt
\pinlabel (a) at 3 0
\scriptsize
\pinlabel $Q$ at 41 71
\pinlabel $T$ at 40 5
\pinlabel $D_T$ at 71 40
\pinlabel $R_T$ at 56 25
\pinlabel $B_T$ at 100 27
\pinlabel $s$ at 9 23

\small
\pinlabel (b) at 182 0
\scriptsize
\pinlabel $L_i$ at 221 59
\pinlabel $T$ at 220 5
\pinlabel $D_T$ at 250 40
\pinlabel $R_T$ at 235 25
\pinlabel $B_T$ at 279 27
\pinlabel $s_i$ at 188 23
\endlabellist
\centering
\includegraphics{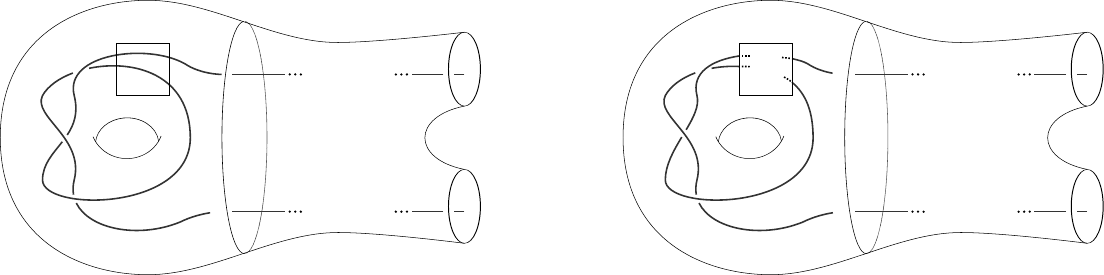}
\caption{: The solid torus $T$ with the string $s$, in (a), and the solid torus $T_i$ with the string $s_i$, in (b).}
\label{Gammai}
\end{figure}
Assume there is a separating disk $D_T$ in $T$ intersecting $s$ at two points and decomposing $T$ into a $3$-ball $B_T$ and a solid torus $R_T$. The boundary of $s$ is in $\partial B_T$ and $(B_T, B_T\cap s)$ is a $2$-string essential free tangle with the two arcs $B_T\cap s$ in $B_T$ being knotted. The string $R_T\cap s$ in the solid torus $R_T$ is such that when capped off by an arc in $D_T$ we get the $(2, -3)$-torus knot boundary parallel in $R_T$.  \\
We say that an arc properly embedded in a solid torus is \textit{essential} if it is not \textit{boundary parallel}, that is the arc does not co-bound an embedded disk in the solid torus with a segment in the boundary of the solid torus, and if the boundary of the solid torus is incompressible in the complement of the arc. In Lemma \ref{essential} we prove that $s$ is essential in $T$.\\
Consider a ball $Q$ in $T-B_T$ intersecting $s$ at two parallel trivial arcs, as in Figure \ref{Gammai}(a), and an infinite collection of knots $L_i$, $i\in \mathbb{N}$. We replace the two parallel trivial arcs by two parallel arcs with the pattern\footnote{By a properly embedded arc in a ball $B$ having the \textit{pattern} of a knot $K$ we mean that when we cap off the arc with a string in $\partial B$ we get the knot $K$.} of a knot $L_i$, as in Figure \ref{Gammai}(b). After this tangle replacement, we denote by $s_i$ the string obtained from $s$, by $T_i$ the solid torus $T$ containing $s_i$, by $\gamma_i$ the graph $\gamma$ whose regular neighborhood is $T_i$, and by $H_i$ the solid torus $H$ containing $T_i$.
Let $E_H(T)$ be the exterior of $T$ in $H$, that is the closure of $H-T$, and $E_T(s)$ be the exterior of $N(s)$ in $T$, that is the closure of $T-N(s)$. The following lemmas are relevant for the next section, and they are also valid if we replace $s$ by $s_i$, $T$ by $T_i$ and $H$ by $H_i$ in their statements.

\begin{lem}\label{essential}
\begin{itemize}
\item[]
\item[(a)] The surfaces $\partial H$ and $\partial T$ are incompressible in $E_H(T)$.
\item[(b)] The arc $s$ is essential in $T$.
\end{itemize}
\end{lem}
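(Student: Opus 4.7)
My plan is to prove both parts by standard innermost-circle and outermost-arc arguments, cutting any hypothetical bad disk along $D_H$ for part (a) and along $D_T$ for part (b), and exploiting the essential free tangle hypotheses on the ball sides together with the explicit geometric data on the solid-torus sides (the pair of arcs $\gamma \cap R$ in the solid torus $R = \overline{H - B_H}$ and the $(2,-3)$-torus knot pattern of $R_T \cap s$).

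\emph{Part (a).} Assume for contradiction that $D \subset E_H(T)$ is a compressing disk for $\partial H$ or $\partial T$, chosen in general position with $D_H$ so that $|D \cap D_H|$ is minimal. An innermost-disk argument on $D$ removes simple closed curves of $D \cap D_H$: such a curve bounds a subdisk $D' \subset D$ lying in either $B_H - T$ or $R - T$, with $\partial D'$ sitting on the pair of pants $D_H - (D_H \cap T)$. Essentiality of the tangle $(B_H, B_H \cap \gamma)$ on the $B_H$ side (and the corresponding handlebody structure of $R - N(\gamma \cap R)$ on the $R$ side) forces $\partial D'$ to bound a disk in $D_H$ disjoint from $T$, so $D$ can be isotoped across that disk to reduce $|D \cap D_H|$, contradicting minimality. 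The remaining arc intersections are handled by an outermost-arc argument on $D$, yielding a subdisk $\delta$ in $B_H - T$ or $R - T$ whose boundary can again be pushed off $D_H$ using the same two ingredients. Hence $D \cap D_H = \emptyset$, so $D$ lies entirely in $B_H - T$ or in $R - T$. In $B_H - T$ the boundary $\partial D$ sits in the disk $\partial H \cap B_H$ (respectively in a disk of $\partial T \cap B_H$), so it bounds a disk there, a contradiction. In $R - T$ a direct analysis of the pair of arcs $\gamma \cap R$ in the solid torus $R$ shows that no essential curve on $\partial H \cap R$ or $\partial T \cap R$ bounds a disk in $R - T$, again a contradiction.

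\emph{Part (b).} Essentiality of $s$ in $T$ splits into non-boundary-parallelism of $s$ and incompressibility of $\partial T$ in $T - N(s)$. For the former, suppose $s$ cobounds a disk $\Delta \subset T$ with an arc of $\partial T$, and minimize $|\Delta \cap D_T|$. The same innermost-circle and outermost-arc reduction with respect to $D_T$ applies, now using essentiality of $(B_T, B_T \cap s)$ on the $B_T$ side and the non-triviality of the $(2, -3)$-torus knot (which $R_T \cap s$ caps off to) on the $R_T$ side. After reduction $\Delta$ lies entirely in $B_T$ or in $R_T$; in the first case a string of the essential tangle becomes boundary parallel, contradicting essentiality, and in the second the $(2,-3)$-torus knot becomes trivial, again a contradiction. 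For incompressibility of $\partial T$ in $T - N(s)$, apply the same reduction to a hypothetical compressing disk $E$: it must end up in $B_T - s$ (ruled out by essentiality of the tangle) or in $R_T - s$ (ruled out because the exterior of the $(2,-3)$-torus knot in a solid torus has incompressible boundary).

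The main obstacle I expect is the analysis on the solid-torus sides, since the essential-tangle hypothesis is stated only for the ball sides $(B_H, B_H \cap \gamma)$ and $(B_T, B_T \cap s)$. On these sides one has to replace it by a direct geometric verification: for $R$ one checks that $R - N(\gamma \cap R)$ is a handlebody with the required incompressibility from the explicit embedding of the two arcs of $\gamma \cap R$, and for $R_T$ one uses the $(2,-3)$-torus knot pattern of $R_T \cap s$ together with the known incompressibility of the boundary of its exterior. Once these side verifications are in place, the cut-and-paste argument proceeds in the standard fashion.
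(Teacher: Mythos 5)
There are two genuine gaps here, one in each part.

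In part (b), your treatment of boundary-parallelism breaks down: you claim that after the innermost/outermost reductions the disk $\Delta$ cobounded by $s$ and an arc of $\partial T$ ``lies entirely in $B_T$ or in $R_T$.'' That is impossible, because $s\subset\partial\Delta$ meets $D_T$ transversely in two points, so $\Delta\cap D_T$ always contains two arcs emanating from those points and $\Delta$ can never be pushed off $D_T$. The paper's proof confronts exactly this residual configuration: it shows $\Delta\cap D_T$ reduces to two arcs, each with one end on $s$ and one end on the arc $b\subset\partial T$, and then the disk pieces these arcs cut off exhibit the strings of $(B_T,B_T\cap s)$ as trivial, contradicting essentiality of that tangle. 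Your version skips the step that actually carries the content. (Your argument for incompressibility of $\partial T$ in $E_T(s)$, by contrast, is essentially the paper's: outermost disks in $B_T$ are killed by essentiality of the tangle and outermost disks in $R_T$ by the fact that an essential disk in $R_T$ must meet the boundary-parallel $(2,-3)$-torus knot at least twice while $\partial O$ meets it at most once.)

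In part (a), the step you yourself flag as ``the main obstacle'' --- the direct verification on the solid-torus side $R=\overline{H-B_H}$ --- is never carried out, and it is precisely where the work lives in your scheme; ``a direct analysis \dots shows'' is not a proof. Worse, you misdescribe the configuration: $\gamma\cap R$ is not a pair of arcs. Since $D_H$ meets $\gamma$ only in one point of each segment $a_1,a_2$ and $B_H\cap\gamma$ consists of the two knotted sub-arcs running from $D_H$ to $\partial H$, the circle $c$ of $\gamma$ lies entirely in $R$, so $\gamma\cap R$ is a circle with two whiskers attached. The paper avoids your entire $D_H$-cutting scheme for part (a) by exploiting the stated fact that $H$ is a regular neighborhood of $c$: any properly embedded disk in $H$ disjoint from the core $c$ has boundary bounding a disk $O$ in $\partial H$ cutting off a ball missing $c$, and since each $a_i$ meets $\partial H$ only once, $O$ misses $\gamma$; a parallel argument after isotoping $\partial D$ into $\partial N(c)$ handles incompressibility of $\partial T$. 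If you insist on cutting along $D_H$, you still need this core observation to dispose of the $R$ side, and you also need to rule out the case where $\partial D$ lies in the disk $\partial H\cap\partial B_H$ but encircles one endpoint of $\gamma\cap\partial H$, which your sketch does not address.
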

\begin{proof}\text{}\\
(a) First we prove that $\partial H$ is incompressible in $E_H(T)$. As $T$ is a regular neighborhood of $\gamma$ this is equivalent to prove that $\partial H$ is incompressible in $H-\gamma$. The graph $\gamma$ in $H$ is defined by a circle $c$ and two segments $a_1$ and $a_2$, each with an end in the circle and the other end in $\partial H$, and $H$ is a regular neighborhood of $c$. Hence, the boundary of a properly embedded disk $D$ in $H$ disjoint from $c$ bounds a disk $O$ in $\partial H$. Furthermore, as $D$ is disjoint from $\gamma$ and each segment $a_1$ and $a_2$ intersects $\partial H$ at a single point, the disk $O$ is disjoint from $\gamma$. Then, the boundary of every embedded disk in $E_H(T)$ with boundary in $\partial H$ bounds a disk in $\partial H-\partial H\cap \partial T$, which means $\partial H$ is incompressible in $E_H(T)$.\\
We prove similarly that $\partial T$ is incompressible in $E_H(T)$. Let $D$ be a properly embedded disk in $E_H(T)$ with boundary in $\partial T$. We have $T=N(c)\cup N(a_1)\cup N(a_2)$. As $a_1$ and $a_2$ have each an end in $\partial H$ and in $c$, we can isotope the boundary of $D$ to $N(c)$. As $H$ is a regular neighborhood of $c$ we have that $\partial D$ bounds a disk $O$ in $\partial N(c)$. As $a_1$ and $a_2$ have each only one end in $c$, we have that $O$ is a disk in $\partial T$. Hence, $\partial T$ is incompressible in $E_H(T)$.\\

\noindent
(b) To prove that $s$ is essential in $T$ we have to prove that $\partial T$ is incompressible in $E_T(s)$ and that $s$ is not boundary parallel. We start by showing that $\partial T$ is incompressible in $E_T(s)$. As the tangle $(B_T, B_T\cap s)$ is essential, the boundary of a properly embedded disk in $B_T- (s\cap B_T)$ bounds a disk in $\partial B_T-(s\cap \partial B_T)$. Also, the string $R_T\cap s$ in the solid torus $R_T$ when capped off by an arc in $D_T$ is the $(2, -3)$-torus knot boundary parallel in $R_T$. Hence, every disk in $R_T-R_T\cap s$ with boundary in $\partial T$ has boundary bounding a disk in $\partial T$. Also, if a disk in $R_T-R_T\cap s$ has boundary in $D_T$ then its boundary bounds a disk in $D_T$ disjoint from $s\cap D_T$. Suppose $D$ is a disk properly embedded in $E_T(s)$ with boundary in $\partial T$. If $D$ is disjoint from $D_T$ then $\partial D$ bounds a disk in $\partial T-s\cap\partial T$. So, if $D$ is a compressing disk for $\partial T$ in $E_T(s)$ it intersects $D_T$. Hence, we assume that $D$ intersects $D_T$ transversely in a collection of arcs and simple closed curves, with $|D\cap D_T|$ minimal. If $D$ intersects $D_T$ in simple closed curves then consider an innermost one in $D$ and the respective innermost disk $O$. From the previous observations, we have that $\partial O$ bounds a disk in $D_T$. Therefore, by an isotopy of $D$ along the ball bounded by $D\cup D_T$ we can reduce $|D\cap D_T|$, which contradicts its minimality. Hence, $D\cap D_T$ is a collection of arcs. Consider an outermost arc $\alpha$ between the arcs $D\cap D_T$ in $D$ and the respective outermost disk, that we also denote by $O$. If $O$ is in $B_T$ then $\partial O$ bounds a disk $O'$ in $\partial B_T$ intersecting $D_T-s$ at a disk. Suppose now that $O$ is in $R_T$. If $O$ is essential in $R_T$ then $O$ intersects at least twice the $(2, -3)$-torus knot obtained from $R_T\cap s$ by capping off the ends of this string in $D_T$. However, $\partial O$ intersects at most once this knot, whether $\alpha$ separates the components of $D_T\cap s$ in $D_T$ or not. This implies that $O$ intersects $R_T\cap s$, which is contradiction with $O$ being disjoint from $s$. Therefore, $O$ is inessential in $R_T$ and $\partial O$ bounds a disk $O'$ in $\partial R_T$ intersecting $D_T-s$ at a disk. In both cases, $O$ in $B_T$ or in $R_T$, $\partial O$ bounds a disk $O'$ intersecting $D_T-s$ at a disk. If we isotope $D$ along the ball bounded by $O\cup O'$ we reduce $|D\cap D_T|$, contradicting its minimality. Hence, $\partial T$ is incompressible in $E_T(s)$.\\
Now we prove that $s$ is not boundary parallel in $T$. Suppose that $D$ is now a disk embedded in $T$ co-bounded by $s$ and an arc $b$ in $\partial T$. Following a similar argument as before we can prove that $D$ does not intersect $D_T$ at simple closed curves and arcs with both ends in $b$. Hence, $D\cap D_T$ is a collection of two arcs, each with an end in $s$ and the other end in $b$. However, the disk components these arcs separate from $D$ imply that the strings of the tangle $(B_T, B_T\cap s)$ are trivial, which contradicts this tangle being essential. Hence, $s$ is not boundary parallel in $T$ and, together with $\partial T$ being incompressible in the exterior of $s$ in $T$, we have that $s$ is essential in $T$.
\end{proof}

\begin{lem}\label{disk in H}
There is no properly embedded disk in $E_H(T)$
\begin{itemize}
\item[(a)] intersecting one of the disks of $T\cap \partial H$ at a single point; or
\item[(b)] with boundary the union of an arc in $\partial T$ and an arc in $\partial H$, and not bounding a disk in $\partial E_H(T)$.
\end{itemize}
\end{lem}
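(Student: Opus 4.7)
The plan is to prove both statements by contradiction, via an innermost-disk / outermost-arc argument against the separating disk $D_H$. Recall that $D_H$ splits $H$ into the solid torus $C=\overline{H\setminus B_H}$, containing the core $c$ and two short segments of $a_1,a_2$, and the $3$-ball $B_H$, which carries the $2$-string essential free tangle $(B_H,B_H\cap\gamma)$. Suppose a disk $D\subset E_H(T)$ verifying the hypothesis of (a) or (b) exists; put $D$ in general position with $D_H$ and choose $D$, among all such disks, so as to minimize $|D\cap D_H|$.

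I first eliminate closed curves of $D\cap D_H$. An innermost closed curve on $D$ bounds a subdisk $D_0\subset D$ lying in $B_H$ or $C$ and disjoint from $\gamma$. If $D_0\subset B_H$, essentiality of the tangle gives a disk in $\partial B_H\setminus\gamma$ with boundary $\partial D_0$; an innermost analysis on $D_H$ relative to $D_H\cap\gamma$ lets us take this disk inside $D_H\setminus\gamma$, whence an isotopy of $D$ across the resulting ball strictly reduces $|D\cap D_H|$. If $D_0\subset C$, an analogous reduction uses that $c$ is a core of $C$ together with Lemma~\ref{essential}(a). Hence $D\cap D_H$ may be assumed to be a disjoint union of arcs.

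Next I pick an outermost arc $\alpha$ of $D\cap D_H$ on $D$, cutting off a subdisk $D_0\subset D$ with $\partial D_0=\alpha\cup\beta$, $\beta\subset\partial D\subset\partial E_H(T)$, and either $D_0\subset B_H$ or $D_0\subset C$. In the $B_H$-case, essentiality of the tangle combined with an outermost analysis on $D_H$ forces $\beta$ to be parallel through $B_H\setminus\gamma$ to an arc in $D_H\setminus\gamma$, and the isotopy of $D$ realizing this parallelism reduces $|D\cap D_H|$. In the $C$-case, the explicit structure of $(C,C\cap\gamma)$ (a solid torus with core $c$ and two short arc segments running to $D_H$) allows the analogous reduction; the only alternative, that $\beta$ is an essential loop around $c$ on $\partial C$, is excluded by $D_0$ being a disk in $C\setminus\gamma$.

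Iterating we reach $D\cap D_H=\emptyset$, so $D$ lies entirely in $B_H$ or in $C$. Since $d_1,d_2\subset\partial H\cap B_H$ and, in (b), the bigon corners lie on $\partial d_1\cup\partial d_2\subset\partial B_H$, in both cases $D\subset B_H$. Then $D$ is a disk properly embedded in the handlebody $B_H\setminus N(B_H\cap\gamma)$, and essentiality of the tangle implies that $\partial D$ bounds a disk on $\partial E_H(T)\cap B_H$. In part (b) this directly contradicts the hypothesis. In part (a), the resulting null-homotopy of $\partial D$ on $\partial B_H\setminus\gamma$ forces $|\partial D\cap\partial d_1|$ to be even, contradicting the transverse single intersection point forced by $|D\cap d_1|=1$. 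The main technical difficulty I anticipate is the outermost-arc step when $D_0\subset C$, where one must verify that the induced isotopy of $D$ does not introduce new intersections with $d_1\cup d_2$ nor destroy the hypothesis of (a) or (b); this requires the explicit picture of $C$ together with Lemma~\ref{essential}(a).
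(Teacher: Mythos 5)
Your argument hinges on the claim that iterated innermost/outermost reductions bring $D\cap D_H$ to the empty set, and that step cannot be carried out: it is exactly where the proposal breaks. The arcs of $D\cap D_H$ have their endpoints where $\partial D$ crosses $\partial\bigl(D_H\cap E_H(T)\bigr)$, and the hypotheses force such crossings to exist. In case (a), $\partial D$ meets the circle $\partial(C_1\cap\partial H)$ exactly once, so it enters the lateral annulus of the tube $C_1$ and, being unable to exit through $\partial(C_1\cap\partial H)$ a second time, must cross $\partial(D_H\cap C_1)$; hence every disk satisfying the hypothesis has $D\cap D_H\neq\emptyset$, no matter how it is isotoped. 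The same happens in case (b) once the arc $a\subset\partial T$ runs essentially through that annulus (and if it does not, $D$ degenerates to the situation of Lemma \ref{essential}(a), not to your endgame). Your outermost-arc step in the $B_H$-case is also not the routine reduction you describe: essentiality of the tangle gives a disk in $\partial B_H-\partial\sigma$ bounded by $\alpha\cup\beta$, but that disk may run over the end-disks of $C_1\cup C_2$ and over $D_H\cap T$, so the isotopy it suggests pushes $\partial D$ off $\partial E_H(T)$ or changes $|D\cap(T\cap\partial H)|$ rather than simply decreasing $|D\cap D_H|$. Consequently the localization ``$D\subset B_H$ or $D\subset C$'' is never reached, and the parity argument that ends your part (a) never applies.

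A telling symptom is that your proof of (a) nowhere uses that both strings of $(B_H,B_H\cap\gamma)$ are knotted, whereas that is precisely where the paper's contradiction comes from. The paper keeps the nonempty minimal arc system $D\cap D_H$ (a family of essential arcs in the pair of pants $D_H-D_H\cap T$): the components of $D\cap B_H$ are then disks in the genus-two handlebody $\overline{B_H-(C_1\cup C_2)}$, and the one meeting $C_1\cap\partial H$ once certifies that $C_1\cap\partial H$ is primitive there, so $\overline{B_H-C_2}$ is a solid torus and the core of $C_2$ is unknotted --- contradicting the knottedness of the strings. Part (b) is then handled by using part (a) to locate $\partial a$ on a single disk of $T\cap\partial H$, deducing $|a\cap D_H|=2$, and analyzing the two possible configurations of arcs of $D\cap D_H$ meeting $a$, each of which forces a string of the tangle to be trivial or the two strings to be parallel, contradicting essentiality. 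You would need to replace your reduction with an argument of this kind; as written, the proposal does not prove either part.
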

\begin{proof}
Let $D$ be a properly embedded disk in $E_H(T)$. Following an argument as in Lemma \ref{essential} we can assume that $|D\cap D_H|$ is minimal and that $D\cap D_H$ is a collection of essential arcs in $D_H-D_H\cap T$ with ends in $D_H\cap T$ and $\partial D_H$. Consider also $B_H\cap T$, which is a collection of two cylinders $C_1$ and $C_2$, and assume that $|D\cap (B_H\cap T)|$ is minimal. If some arcs of $\partial D\cap C_i$ have ends in the same boundary component of the annlus $\partial C_i-C_i\cap\partial B_H$, $i=1,2$,  then by using an innermost curve argument we can reduce $|D\cap (B_H\cap T)|$ and contradict its minimality. Therefore, $D\cap C_i$ is a collection of essential arcs in the annulus $\partial C_i-C_i\cap\partial B_H$, $i=1, 2$.\\

\noindent (a) Assume that $D$ intersects $T\cap \partial H$ exactly once at $C_1\cap\partial H$. As $D\cap D_H$ is a collection of arcs, the components of $D\cap B_H$ are a collection of disks. Consequently, one component of $D\cap B_H$ is a disk in $B_H-B_H\cap T$ intersecting $C_1\cap \partial H$ once. This means that $C_1\cap \partial H$ is primitive with respect to the complement of $B_H\cap T$ in $B_H$. Hence, as the complement of $C_1\cup C_2$ is a handlebody (because $(B_H, B_H\cap \gamma)$ is a free tangle), the complement of $C_2$ in $B_H$ is a solid torus. Then the core of $C_2$ is unknotted, which is a contradiction to $B_H\cap \gamma$ being a collection of two knotted arcs in $B_H$.\\

\noindent
(b) Suppose the disk $D$ is as in the statement with $\partial D=a\cup b$, where $a$ is an arc in $\partial T$ and $b$ an arc in $\partial H$. 
The intersection of $D$ with $T\cap \partial H$ is the boundary of $a$ (and $b$), and notice that $C_1\cup C_2$ intersects $\partial H$ at $T\cap\partial H$. From the statement $(a)$ of this lemma the boundary of $a$ (and $b$) is in one disk component of $T\cap\partial H$, that without loss of generality we assume to be $C_1\cap \partial H$. As $D\cap C_i$ is a collection of essential arcs in $\partial C_i-C_i\cap \partial B_H$ and $\partial D$ intersects $\partial T\cap \partial H$ in two points, the arc $a$ intersects $D_H$ at two points. Furthermore, by an innermost arc argument and the minimality of $|D\cap D_H|$, there are no arcs of $D\cap D_H$ with both end points in $b\cap D_H$.

\begin{figure}[htbp]
\labellist
\small \hair 2pt
\pinlabel (a) at 17 -3
\scriptsize
\pinlabel $a$ at 62 3
\pinlabel $\alpha$ at 62 25
\pinlabel $D$ at 28 18
\pinlabel $b$ at 15 30

\small
\pinlabel (b) at 198 -3
\scriptsize
\pinlabel $a$ at 240 3
\pinlabel $D$ at 228 18
\pinlabel $b$ at 195 30

\endlabellist
\centering
\includegraphics{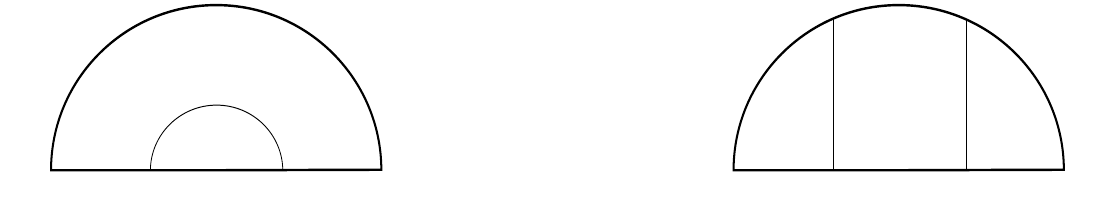}
\caption{: The arcs $D\cap D_H$ in $D$ when a single arc intersects $a$, in (a), and when two arcs intersect $a$, in (b).}
\label{DBoundTH}
\end{figure}

\noindent
The points of $a\cap D_H$ cobound one or two arcs of $D\cap D_H$. Suppose $a\cap D_H$ is the boundary of a single arc, $\alpha$, component of $D\cap D_H$, as in Figure \ref{DBoundTH}(a). Hence, the arc $\alpha$ separates a disk from $D$ in $B_H$ that cuts a ball from $B_H-C_1$ contaning $C_2$, otherwise $\partial D$ bounds a disk in $\partial E_H(T)$. As there are no local knots in the free tangle $(B_H, B_H\cap s)$, this case implies that either the core of $C_2$ corresponds to a trivial string or a string parallel to the core of $C_1$, which contradicts this tangle being a $2$-string essential free tangle (see Lemma 2.1 in \cite{Nogueira}). If each point of $a\cap D_H$ cobounds an arc of $D\cap D_H$ with the other end in $b$, as in Figure \ref{DBoundTH}(b), then the core $C_1$ corresponds to a string of $(B_H, B_H\cap \gamma)$ that is trivial, which also contradicts this tangle being essential.
\end{proof}

\noindent To construct the knots to prove the main theorem of this paper we identify the solid tori $H$ and $H_i$ along their boundaries, by identifying a meridian of one boundary to a longitude of the other and defining a Heegaard decomposition $H\cup H_i$ of $S^3$, such that $\partial s$ is identified with $\partial s_i$. From construction, $K_i=s\cup s_i$ is a knot in $S^3$, for $i\in \mathbb{N}$, and in the next proposition we prove these knots are prime.

\begin{prop}\label{Ki}
The knots $K_i$, $i\in \mathbb{N}$, are an infinite collection of distinct prime knots.
\end{prop}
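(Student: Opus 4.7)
The plan is to establish primeness of each $K_i$ by a contradiction argument combining the essential tori $\partial H$ and $\partial T$ with the tangle properties of Lemmas \ref{essential} and \ref{disk in H}, and then to deduce distinctness by detecting $L_i$ as a pattern for a companion of $K_i$.

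The first step is to show the torus $\partial H = \partial H_i$ is incompressible in $S^3 - K_i$. A compressing disk may be taken to meet $\partial T$ minimally; innermost-circle arguments, using incompressibility of $\partial T$ in $E_H(T)$ from Lemma \ref{essential}(a) and in $T - s$ from essentiality of $s$ (Lemma \ref{essential}(b)), eliminate $\partial T$-intersections. The resulting disk lies either in $E_H(T)$, contradicting incompressibility of $\partial H$ there (Lemma \ref{essential}(a)), or in $T - s$, which is impossible because $\partial H \cap T$ is a disjoint union of two disks and supports no essential loop on $\partial H$. The symmetric argument on the $H_i$-side completes the step.

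Now suppose $K_i$ is composite, with a sphere $S$ meeting $K_i$ in two points realizing a non-trivial decomposition. Isotope $S$ to minimize $|S \cap \partial H|$ among decomposing spheres. Standard cut-and-paste, based on the incompressibility just established, the irreducibility of $S^3 - K_i$, and parity of $|S \cap K_i|$, reduces to $S \cap \partial H = \emptyset$. Then $S \subset H$ (up to swapping sides), bounding a ball $B \subset H$ containing a subarc $\alpha$ of $s$ with $\partial \alpha \subset S$, and non-triviality of the decomposition forces $\alpha$ to be knotted in $B$. Minimizing $|S \cap \partial T|$ analogously, Lemma \ref{essential} arranges $S$ disjoint from $\partial T$, so $S \subset T - s$; an outermost-arc reduction against $D_T$, combined with essentiality/freeness of the tangle $(B_T, B_T \cap s)$, the $(2,-3)$-cabling in $R_T$, and both parts of Lemma \ref{disk in H}, rules out any enclosed knotted subarc. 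This contradiction proves $K_i$ is prime.

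For distinctness, the torus $\partial T_i$ is essential in $S^3 - K_i$ by the analogues of Lemma \ref{essential}, and it bounds a solid-torus piece inside which two parallel strands of $s_i$ realize the pattern of $L_i$; consequently, the Alexander polynomial (or any other knot invariant) of $L_i$ is detected by $K_i$. Choosing the knots $L_i$ with pairwise distinct invariants yields infinitely many distinct $K_i$. The main obstacle in the plan is the analysis inside $T - s$ against $D_T$: ensuring that, after every cut-and-paste, the supposed decomposing sphere cannot quietly re-acquire a knotted subarc requires carefully combining essentiality of $(B_T, B_T \cap s)$ and $(B_H, B_H \cap \gamma)$, the non-triviality of the cable in $R_T$, and both parts of Lemma \ref{disk in H}.
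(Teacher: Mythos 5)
Your strategy --- push a putative decomposing sphere $S$ off $\partial H$, then off $\partial T$, and derive a contradiction inside $T$ --- is genuinely different from the paper's, and it has a gap at its first and most important step. Since $S$ meets $K_i$ in two points, $S-K_i$ is an annulus, and a circle of $S\cap\partial H$ that separates the two punctures of $S$ bounds only \emph{once-punctured} disks on $S$. Incompressibility of $\partial H-K_i$, irreducibility and parity remove only the circles bounding unpunctured disks; to kill the once-punctured ones you need $\partial H$ to be meridionally incompressible (no essential circle of $\partial H-K_i$ bounds a disk meeting $K_i$ once), or an explicit disk-swap with once-punctured disks, and you neither state nor prove this. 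The same issue recurs verbatim for $S\cap\partial T$. The paper is structured precisely to avoid this: it never pushes the decomposing sphere off $\partial H$. It isolates the tangle replacement in a ball $Q$ meeting the knot in two \emph{parallel} strands, uses the parallelism disk to push $S$ off $Q$ (Lemma \ref{KL} --- for a product tangle this isotopy is elementary), and then invokes local unknottedness of the complementary tangle $(Q^c,Q^c\cap K)$ (Lemma \ref{locally}), which is proved against $\partial T\cup\partial T'$, not $\partial H$. Your argument also never uses the parallelism of the strands in $Q$, yet that is exactly what prevents $S$ from enclosing a knotted subarc coming from the $L_i$-pattern: on the $H_i$ side the capped-off string of $R_{T_i}$ is no longer the bare $(2,-3)$-torus knot, so your appeal to the $(2,-3)$-cabling does not cover the region where $L_i$ lives. (A smaller point: a sphere disjoint from $\partial T$ meets $D_T$ in circles, not arcs, so an ``outermost-arc reduction against $D_T$'' is not available there.)

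The distinctness step is also not secure as written. The companion $L_i$ sits under a two-strand pattern whose winding number may well be zero, in which case the satellite formula for the Alexander polynomial involves $\Delta_{L_i}(t^{0})=\pm1$ and the Alexander polynomial sees nothing of $L_i$; ``or any other knot invariant'' is not an argument. The paper instead notes that $K_i$ is a satellite with companion $L_i$ and pattern $K$, and appeals to the uniqueness of the JSJ decomposition of the exterior to conclude that distinct $L_i$ give distinct $K_i$. That is the step to borrow.
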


\noindent To prove that the knots $K_i$ are prime we use the following technical result. Let $K$ and $L$ be non-trivial knots. Take a ball $B$ intersecting $K$ in two parallel trivial arcs with the tangle $(B^c, B^c\cap K)$ being locally unknotted. We replace the arcs of $B\cap K$ in $B$ by two parallel arcs with the pattern of $L$, where each new arc has the same boundary component as one of the replaced arcs. We denote the resulting knot from this construction by $K_L$.

\begin{lem}\label{KL}
The knot $K_L$ is prime.
\end{lem}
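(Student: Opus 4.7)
I would argue by contradiction. Suppose that $K_L$ is composite, and let $S$ be an essential decomposing $2$-sphere: $S$ intersects $K_L$ transversely in exactly two points and bounds balls on both sides, each meeting $K_L$ in a properly knotted arc. The aim is to derive a contradiction by analyzing $S$ against the Conway sphere $\partial B$ (which meets $K_L$ in four points) and, when $S$ lies inside $B$, against the disk $D \subset B$ cobounded by the two parallel arcs $\alpha_1, \alpha_2$ of pattern $L$ together with two arcs on $\partial B$.

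First I would place $S$ in general position with respect to $\partial B$ and isotope $S$ in $S^3$, keeping $|S \cap K_L| = 2$, so as to minimize $|S \cap \partial B|$. The intersection $S \cap \partial B$ is then a collection of disjoint simple closed curves in the four-punctured sphere $\partial B \setminus K_L$. For an innermost circle $c$ of this intersection on $S$, bounding an innermost disk $\Delta \subset S$, there are two sub-cases. If $\Delta \cap K_L = \emptyset$, then $\Delta$ is a compressing disk for $\partial B$ in the complement of $K_L$, lying in $B$ or in $B^c$; depending on the side, the local unknottedness of $(B^c, B^c \cap K)$ (implying that $\partial B$ is incompressible in $B^c \setminus K$) or the incompressibility of $\partial B$ in $B \setminus (\alpha_1 \cup \alpha_2)$ (from non-triviality of the pattern $L$ together with the parallel structure, in the spirit of Lemma \ref{essential}(b) and Lemma \ref{disk in H}) forces $c$ to bound a disk in $\partial B \setminus K_L$. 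An isotopy of $S$ across the resulting trivial $2$-sphere then reduces $|S \cap \partial B|$, contradicting minimality. A parallel argument, using that $|S \cap K_L| = 2$, handles the case when $\Delta$ contains punctures.

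Having reduced to $S \cap \partial B = \emptyset$, the sphere $S$ lies entirely in $B$ or in $B^c$. If $S \subset B^c$, the local unknottedness of $(B^c, B^c \cap K)$ directly gives that $S$ bounds a ball in $B^c$ meeting $K_L$ in an unknotted arc (since $K_L$ and $K$ agree outside $B$), contradicting $S$ being a decomposing sphere. If $S \subset B$, I would introduce the disk $D$ and run the analogous minimization on $|S \cap D|$: innermost-circle surgery removes all closed curves of $S \cap D$, while $|S \cap K_L| = 2$ leaves at most one arc component of $S \cap D$ with endpoints on $\alpha_1 \cup \alpha_2$. Outermost arc analysis on $S \cap D$, combined with the parallel structure of $\alpha_1, \alpha_2$ encoded by $D$ and the non-triviality of $L$, then forces $S$ to be boundary-parallel in $B$ (hence one of the two sides of $S$ bounds an unknotted arc of $K_L$), again contradicting $S$ being a decomposing sphere.

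The main obstacle is this last step: exploiting the parallel structure of $\alpha_1, \alpha_2$ to show that no decomposing sphere for $K_L$ can live purely inside $B$. The crux is that any ``local'' decomposition of one of the $L$-patterned arcs would, by parallelism captured by $D$, produce a corresponding decomposition of the other arc, forcing extra intersection with $K_L$ beyond the two prescribed points. Making this rigorous requires careful outermost-arc analysis on $D$ combined with tangle-theoretic primeness arguments for the configuration of two parallel arcs of pattern $L$ in $B$, in the same spirit as the arguments of Lemma \ref{disk in H}.
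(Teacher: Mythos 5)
Your plan for the non-compositeness half is broadly in the spirit of the paper's (reduce to $S$ disjoint from $\partial B$, then kill the case $S\subset B^c$ by local unknottedness and the case $S\subset B$ by the parallelism disk), but there are two genuine gaps. First, you never prove that $K_L$ is non-trivial, which is half of primeness and half of the paper's proof: there the author supposes $K_L$ bounds a disk $D$, minimizes $|D\cap\partial B|$, shows $D\cap\partial B$ is two arcs each running from one string of $B\cap K_L$ to the other (an outermost disk cut off by a single string would trivialize that knotted string), and then swaps the tangle $(B,B\cap K_L)$ back to $(B,B\cap K)$ to produce a disk bounded by $K$, contradicting $K$ being knotted. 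Without this, "no decomposing sphere" does not yield "prime."

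Second, your mechanism for emptying $S\cap\partial B$ is not justified by the hypotheses. You assert that local unknottedness of $(B^c,B^c\cap K)$ implies $\partial B$ is incompressible in $B^c\setminus K$; it does not. Local unknottedness is a statement about balls meeting the strings in a single arc, and says nothing about compressing disks for the four-punctured sphere $\partial B\setminus K$ (nor about meridional compressions, which is the case you dismiss with "a parallel argument handles the case when $\Delta$ contains punctures" --- that case is precisely where a once- or twice-punctured innermost disk can be essential, and it is not parallel to the unpunctured case). The paper avoids needing any incompressibility of $\partial B$ from the outside: it uses the disk cobounded by the two parallel strings of $B\cap K_L$ to shrink the tangle to a neighborhood of that band and push the decomposing sphere entirely off $B$ (any ball in $B$ meeting one string in a knotted arc and missing the other is ruled out because the band, cut along its intersection with the sphere, exhibits that subarc as unknotted). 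If you reorganize your argument around that band from the start, rather than around surgering $S\cap\partial B$, the unsupported incompressibility claims become unnecessary; but as written, both the non-triviality step and the reduction step are missing or unsound.
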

\begin{proof}
If the knot $K_L$ is trivial then it bounds a disk $D$ in $S^3$. Then, $\partial D$ intersects $\partial B$ at four points. Suppose that $|D\cap \partial B|$ is minimal. By an innermost curve argument, as used before, we can show that $D\cap \partial B$ is a collection of two arcs. The strings of $B\cap K_L$ are knotted and each can't co-bound an outermost disk of $D-D\cap \partial B$ with an arc in $\partial B$. Hence, the arcs of $D\cap \partial B$ have an end on each string of $B\cap K_L$ and co-bound together with the strings a disk in $B$. Each arc of $D\cap \partial B$ also co-bounds a disk with a string of $K_L\cap B^c$. Therefore, if we replace the tangle $(B, B\cap K_L)$ with the tangle $(B, B\cap K)$ we obtain a disk in $S^3$ bounded by $K$, which is a contradiction because $K$ is knotted. Hence, the knot $K_L$ is also non-trivial.\\
Now we prove that $K_L$ is prime. Suppose there is a decomposing sphere $S$ for $K_L$. As $(B, B\cap K_L)$ is defined by two parallel strings in $B$, using the disk co-bounded by the two strings $B\cap K_L$ in $B$ we can show that $S$ can be assumed disjoint from $B$. However, this means that $S$ is in $B^c$, which contradicts $(B^c, B^c\cap K_L)$ being locally unknotted.
\end{proof}

\noindent As for the construction of the knots $K_i$, we construct a knot $K$ by identifying two copies of $H$, say $H$ and $H'$, by identifying a meridian of one boundary to a longitude of the other and defining a Heegaard decomposition $H\cup H'$ of $S^3$, such that the two copies of $s$, say $s$ and $s'$ resp., are also identified along their boundaries. As $s$ is essential in $H$ we have that $\partial H$ defines a meridional incompressible surface in the exterior of $K$, which means that $K$ is not trivial. We also denote the copy of the solid torus $T$ of $H$ in $H'$ by $T'$. \\
We will use this knot $K$, the knots $L_i$ and the construction of Lemma \ref{KL} to define the knots $K_i$, but first we need the following lemma. Let $Q$ be the ball as in Figure \ref{Gammai} and $Q^c$ its complement.

\begin{lem}\label{locally}
The tangle $(Q^c, Q^c\cap K)$ is locally unknotted.
\end{lem}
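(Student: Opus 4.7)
The plan is to show that every $2$-sphere $S\subset\text{int}(Q^c)$ meeting $K$ transversely in two points bounds a $3$-ball $B\subset Q^c$ (the side of $S$ avoiding $Q$) such that $B\cap K$ is boundary parallel in $B$. The strategy is a cascade of innermost-disk / outermost-arc minimizations through the nested surfaces $\partial H$, $\partial T$, and $D_T$ inside $H$, with symmetric reductions inside $H'$. Throughout, Lemmas \ref{essential} and \ref{disk in H} (and their primed analogues for $(H',T',s')$) are the main technical tools.

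As a preliminary, I would promote Lemma \ref{essential}(a) to the statement that $\partial H$ is incompressible in $H\setminus s$ (and, symmetrically, $\partial H$ in $H'\setminus s'$). For a hypothetical compressing disk $D\subset H\setminus s$ with $\partial D$ essential on $\partial H$, make $D$ transverse to $\partial T$ and minimize $|D\cap\partial T|$: innermost circles are eliminated by the $\partial T$-incompressibility of $E_H(T)$ and of $T\setminus s$ (Lemma \ref{essential}(a),(b)); outermost arcs are eliminated by Lemma \ref{disk in H}(b) for outermost disks in $E_H(T)$ and by Lemma \ref{essential}(b) for outermost disks in $T\setminus s$ (whose boundaries lie on $\partial T$ because the $\partial H$-portion of $\partial D$ lies in $\partial H\cap T = \Delta_1\cup\Delta_2\subset\partial T$). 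The residual disk either lies in $E_H(T)$, contradicting Lemma \ref{essential}(a), or lies in $T$ with $\partial D\subset\Delta_1\cup\Delta_2$, contradicting $\partial D$ essential on $\partial H$.

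Next, I isotope $S$ in $Q^c$ (keeping $|S\cap K|=2$) to minimize $|S\cap\partial H|$. An innermost disk $D_0\subset S$ cut off by a circle of $S\cap\partial H$ contains $0$, $1$, or $2$ of the two points of $S\cap K$. If $|D_0\cap K|=0$, the preliminary step forces $\partial D_0$ to bound a disk on $\partial H$, and a bigon isotopy reduces $|S\cap\partial H|$, the choice of $\partial H$-disk being dictated by the requirement to avoid both $Q$ and $\partial s$; the case $|D_0\cap K|=2$ reduces to $|D_0\cap K|=0$ by picking an innermost disk on the opposite side of $S$. The intermediate case $|D_0\cap K|=1$ is ruled out by a secondary minimization of $|D_0\cap\partial T|$ and an appeal to Lemma \ref{disk in H}(a): otherwise one obtains a disk in $E_H(T)$ meeting $\Delta_i$ in a single point. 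After this step $S\cap\partial H=\emptyset$, so WLOG $S\subset H$. A further analogous reduction of $|S\cap\partial T|$, followed by that of $|S\cap D_T|$, lands $S$ either in $B_T$ or in $R_T\setminus Q$. In the former, the essential free structure of $(B_T, B_T\cap s)$ (prime by the Appendix example) yields $B\cap K$ boundary parallel in $B$; in the latter, $B\cap K$ is a subarc of the $(2,-3)$-torus-knot pattern $s\cap R_T$ contained in $R_T\setminus Q$, and is boundary parallel in $B$ because all nontrivial tangling of this pattern arc has been deliberately concentrated inside $Q$ by construction.

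The main obstacle is the careful bookkeeping needed to verify that each bigon isotopy can be performed inside $Q^c$ — at each stage one of the two $\partial H$-disks (or $\partial T$-disks, or $D_T$-disks) bounded by the relevant curve must be chosen to avoid $Q$, and when both would cross $Q$ one must push $S$ the ``long way around'' — together with the symmetric treatment of the case in which the $Q^c$-ball $B$ is the ``outer'' $3$-ball of $S$ containing $H'$, which is handled by running the analogous minimizations inside $(H',s')$ using the primed analogues of Lemmas \ref{essential} and \ref{disk in H}.
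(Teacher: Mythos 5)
Your overall strategy (isotope $S$ successively off $\partial H$, $\partial T$ and $D_T$ until it lies in $B_T$ or in $R_T$) is much more laborious than the paper's argument, which never fully simplifies $S$: the paper assumes the arc cut off is knotted, observes that $S$ must then meet $T\cup T'$, pushes $S$ off $B_T\cup B_{T'}$ using the free essential tangle structure, and analyses the disk components into which $\partial T\cup\partial T'$ cuts $S$ --- a disk outside $T\cup T'$ would have to be longitudinal (no $S^2\times S^1$ or lens space summand of $S^3$) and would contradict the linking of the cores of $T$ and $T'$; a disk inside $T\cup T'$ and disjoint from $s\cup s'$ contradicts minimality via Lemma \ref{essential}(b); so some disk is an essential disk of $R_T$ meeting $s$ exactly once, contradicting the winding number two of the $(2,-3)$-pattern. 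Measured against this, your proposal has concrete gaps. First, your disposal of the case $|D_0\cap K|=1$ is wrong: a disk in $H$ with boundary on $\partial H$ meeting the string $s$ once does not produce a disk in $E_H(T)$ meeting a component of $T\cap\partial H$ once --- the intersection point with $s$ lies in the interior of $T$, not in the feet of $T$ on $\partial H$, so Lemma \ref{disk in H}(a) is not the relevant tool. What is needed here (and what the paper uses) is that an essential disk of $T$ disjoint from $B_T$ is an essential disk of $R_T$ and hence meets the capped-off $(2,-3)$-torus knot at least twice; you never invoke this fact.

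Second, your terminal case is unjustified: you assert that an arc of $s\cap R_T$ cut off by a ball in $R_T\setminus Q$ is boundary parallel ``because all nontrivial tangling of this pattern arc has been deliberately concentrated inside $Q$,'' but for the knot $K$ of this lemma $Q\cap s$ consists of two \emph{trivial} parallel arcs --- the $(2,-3)$-tangling of $s\cap R_T$ lies outside $Q$ --- so the quoted reason is false, and you still must exclude a ball in $R_T$ meeting the boundary-parallel pattern in a single knotted arc (primeness of the trefoil only tells you such an arc would carry the entire trefoil, not that it cannot occur). Third, your reductions of $|S\cap\partial H|$ in the case $|D_0\cap K|=0$ rely on ``choosing'' the disk in $\partial H$ bounded by $\partial D_0$ so as to avoid $\partial s$ and $Q$, and on pushing ``the long way around'' otherwise; on the torus $\partial H$ an inessential circle bounds a unique disk, so no such choice exists, and the configuration in which that unique disk contains $\partial s$ (so that the compression ball meets $K$ and the isotopy would change the local knot type) is precisely what you must argue about and do not. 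These are not bookkeeping issues but the substantive content of the lemma, so as written the proposal does not establish it.
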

\begin{proof}
Suppose $(Q^c, Q^c\cap K)$ is locally knotted. Then there is a sphere $S$ bounding a ball $P$ intersecting $Q^c\cap K$ at a single knotted arc. We have that $s$ and $s'$ have no local knots in $T\cup T'$. Then $S$ intersects $T$ or $T'$.\\
Consider the intersection of $S$ with $\partial T$ and $\partial T'$, and  suppose it has a minimal number of components. From the construction of the knot $K$ the cores of the solid tori $T$ and $T'$ define a two component link with each component being unknotted.\\
As the tangle $(B_T, B_T\cap s)$ is free and essential we can assume that $S$ is disjoint from $B_T$ (and similarly, that $S$ is disjoint from $B_{T'}$).\\
The intersections of $S$ with the boudaries of $T$ and $T'$ is a collection of simple closed curves. As $S$ is disjoint from $B_T$ and $B_{T'}$ the curves of intersection are either in $\partial T-B_T$ or in $\partial T'-B_{T'}$. Consider $E$ a disk component of $S$ separated by $\partial T\cup \partial T'$ from $S$. Suppose $E$ is not in $T\cup T'$ and its boundary is in $\partial T$ (or similarly $\partial T'$). From the minimality of $|S\cap \partial(T\cup T')|$ and as $S^3$ does not have a $S^2\times S^1$ or a Lens space summand, we have that $\partial E$ is a longitude of $\partial T$. Therefore, the core of $T$ bounds a disk disjoint from $T'$, which is a contradiction to the cores of $T$ and $T'$ being linked from construction. Hence, $E$ is in $T$ or $T'$. If $E$ is in $T$ (or similarly in $T'$) and is disjoint from $s$ then as $s$ is essential in $T$ we have that $\partial E$ bounds a disk in $\partial T-s$. In this case we can reduce the number of components of $S$ intersection with $\partial T\cup \partial T'$, which is a contradiction to its minimality. Then, we can assume that all disks $E$ intersect $s$ or $s'$. If some disk $E$ intersects either $s$ or $s'$ at two points then some other disk component of $S$ separated by $\partial T\cup \partial T'$ is disjoint from $s$ and $s'$, which is a contradiction to all disks $E$ intersecting $s$ or $s'$. Then, there is an essential disk $E$ in $T$ (or similarly, in $T'$) that intersects $s$ at a single point. As before, let $R_T$ be the solid torus separated by $D_T$ from $T$. From the construction of $s$ in $T$, if we cap off $R_T\cap s$ with an arc in $D_T$ we get a torus knot. Then any essential disk in $R_T$ intersects the knot in more than one point. As $E$ is disjoint from $B_T$ it is a non-separating disk in $R_T$ intersecting the torus knot at a single point, which is a contradiction. Hence, $(Q^c, Q^c\cap K)$ is locally unknotted.
\end{proof}

\begin{proof}[Proof of Proposition \ref{Ki}]
The knots $K_i$ are the knots $K_{L_i}$ obtained from the knots $K$ and $L_i$ with a construction as in Lemma \ref{KL}. From Lemmas \ref{KL} and \ref{locally} the knots $K_i$ are prime.\\
Each knot $K_i$ is also sattelite with companion knot $L_i$ and pattern knot $K$. Then, from the unicity of JSJ-decomposition of compact $3$-manifolds and as the knots $L_i$ are distinct we have that the knots $K_i$, $i\in \mathbb{N}$, are an infinite collection of distinct prime knots.
\end{proof}

\section{Knots with meridional essential surfaces for all genus}

In this section we prove Theorem \ref{prime}, and its corollary, by showing the knots $K_i$, $i\in \mathbb{N}$, have meridional essential surfaces of all positive genus and two boundary components. We start by constructing these surfaces, denoted by $F_1, \ldots F_g, \ldots$ where $F_g$ has genus $g$, in the complement of an arbitrary knot $K_i$, and afterwards we prove they are essential in $E(K_i)$. In this construction we denote the boundaries of $s$ and $s_i$ by $\partial_1 s$ ($=\partial_1 s_i$) and $\partial_2 s$ ($=\partial_2 s_i$). Denote by $X$ (resp., $Y$) the punctured torus $\partial T$ (resp., $\partial T_i$) obtained by cutting the interior of the discs $\partial T\cap \partial H$ (resp., $\partial T_i\cap \partial H$) . We also denote by $\partial_i X$ (resp., $\partial_i Y$) the boundary component of $X$ (resp., $Y$) related to $\partial_i s$, $i=1, 2$.\\
The surface $F_1$ is defined as $X$ together with the annuli cut by $\partial X$ from $\partial H\cap E(K_i)$, that we denote by $O_i$, $i=1, 2$, with respect to $\partial_i X$. The surface $F_2$ is obtained from $X$ and $Y$ by gluing two copies of $O_1$ to $\partial_1 X$ and $\partial_1 Y$, pushing them slightly into $H$ and $H_i$ respectively, and  identifying the boundary components $\partial_2 X$ and $\partial_2 Y$. In Figure \ref{F1} we have a schematic representation of $F_1$ and $F_2$.

\begin{figure}[htbp]
\labellist
\small \hair 2pt
\pinlabel (a) at 55 0
\scriptsize
\pinlabel $F_1$ at 64 53
\pinlabel $X$ at 60 20
\pinlabel $K_i$ at 98 33
\pinlabel $\partial H$ at 73 5

\small
\pinlabel (b) at 215 0
\scriptsize
\pinlabel $F_2$ at 221 55
\pinlabel $X$ at 220 20
\pinlabel $Y$ at 261 20
\pinlabel $K_i$ at 248 48.5
\pinlabel $\partial H$ at 233 5

\endlabellist
\centering
\includegraphics{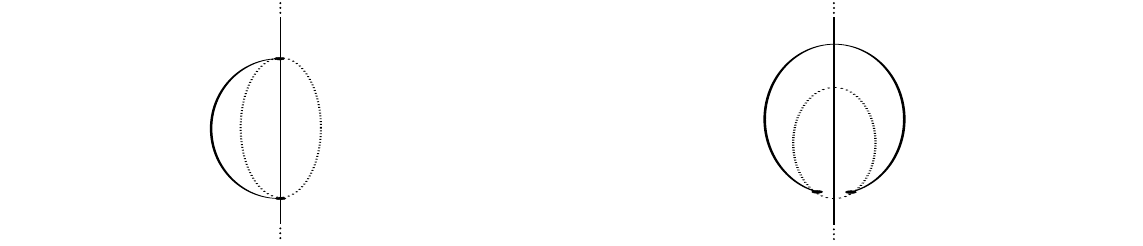}
\caption{: A schematic diagram of surface $F_1$, in (a), and surface $F_2$, in (b).}
\label{F1}
\end{figure}

\noindent To construct the surfaces $F_g$, for $g\geq 3$, we follow a general procedure as explained next. In $H_i$ consider a copy of $Y$ and an annulus $A$, around $s_i$, defined by $\partial N(s_i)-(\partial N(s_i)\cap \partial H_i)$. We denote by $Z$ the surface obtained by identifying $Y$ and $A$ along the boundaries $\partial_1 Y$ and $\partial_1 A$. Let $n=g-1$ and $A_1, \ldots, A_{n-2}$ be disjoint copies of $A$ disjoint from $Z$. Consider also $n$ disjoint copies of $X$ in $H$, denoted by $X_1, \ldots, X_n$. Denote $\partial_1 X_j$ (resp., $\partial_2 X_j$) the boundary component of $X_j$ around $\partial_1 s$ (resp., $\partial_2 s$). Similarly, we label the boundary components of $A_j$ by $\partial_1 A_j$ and $\partial_2 A_j$. To construct $F_g$ we start by attaching $\partial_2 X_n$ and $\partial_2 X_{n-1}$ to the two boundary components of $Z$ respecting the order from $\partial_2 s$. If $g\geq 4$ we also attach $\partial_2 X_{n-2}, \ldots, \partial_2 X_1$ to $\partial_2 A_{n-2}, \ldots, \partial_2 A_1$, respectively, and $\partial_1 X_{n}, \ldots, \partial_1 X_3$ to $\partial_1 A_{n-2}, \ldots, \partial_1 A_1$, respectively. The surface $F_g$ has two boundary components ($\partial_1 X_1$ and $\partial_1 X_2$) and Euler characteristic $-2g$, which means the genus of $F_g$ is $g$. In Figure \ref{F3} we have a schematic representation of $F_3$ and $F_4$, and in Figure \ref{Fn} a representation of the general construction of $F_g$.\\

\begin{figure}[htbp]
\labellist
\small \hair 0pt
\pinlabel (a) at 45 0
\scriptsize
\pinlabel $F_3$ at 60 80
\pinlabel $Z$ at 97 82
\pinlabel $X_2$ at 62 21
\pinlabel $X_1$ at 66 30
\pinlabel $K_i$ at 91 60
\pinlabel $\partial H$ at 73 5

\small
\pinlabel (b) at 195 0
\scriptsize
\pinlabel $F_4$ at 221 90
\pinlabel $Z$ at 258 90
\pinlabel $X_3$ at 218 12
\pinlabel $X_2$ at 222 21
\pinlabel $X_1$ at 225 30
\pinlabel $A_1$ at 266 51
\pinlabel $K_i$ at 249 62
\pinlabel $\partial H$ at 233 5

\endlabellist
\centering
\includegraphics{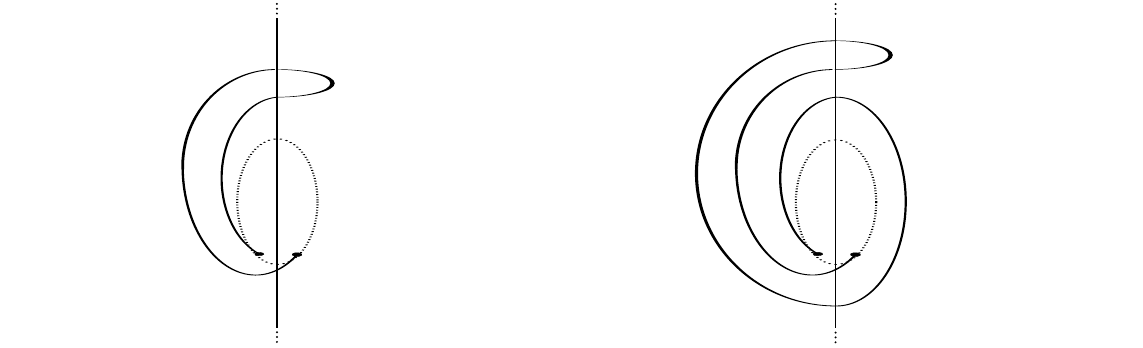}
\caption{: A schematic diagram of surface $F_3$, in (a), and surface $F_4$, in (b). }
\label{F3}
\end{figure}

\begin{figure}[htbp]

\labellist
\hair 2pt
\scriptsize
\pinlabel $F_g$ at 128 143
\pinlabel $Z$ at 180 145

\pinlabel $X_{g-1}$ at 133 16
\pinlabel $X_{g-2}$ at 139 24
\pinlabel $X_{g-3}$ at 141 32
\pinlabel $X_3$ at 137 46
\pinlabel $X_2$ at 138 56
\pinlabel $X_1$ at 141 72
\pinlabel $\cdots$ at 118 75
\pinlabel $A_1$ at 192 72
\pinlabel $\cdots$ at 205 75
\pinlabel $A_{g-3}$ at 210 25
\pinlabel $K_i$ at 175 89
\pinlabel $\partial H$ at 154 5

\endlabellist

\centering
\includegraphics{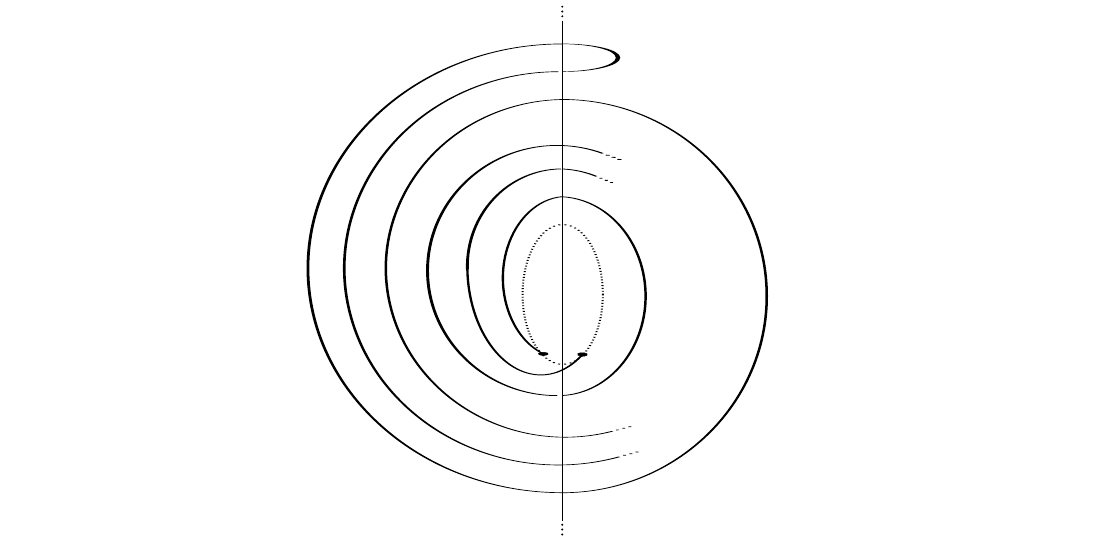}
\caption{: A schematic general representation of the surface $F_g$, for $g\geq 3$.}
\label{Fn}
\end{figure}

\begin{lem}\label{g=1,2}
The surfaces $F_1$ and $F_2$ are essential in the exterior of the knot $K_i$, $i\in \mathbb{N}$.
\end{lem}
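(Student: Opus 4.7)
The plan is to verify that each $F_g$ (for $g=1,2$) is incompressible and not boundary-parallel in $E(K_i)$; boundary-incompressibility, if needed, follows from a parallel argument to the compression case. Non-boundary-parallelness is immediate from the Euler characteristic count, since $\chi(F_1)=-2$ and $\chi(F_2)=-4$ while the torus $\partial E(K_i)$ has $\chi=0$ and cannot contain subsurfaces of such negative Euler characteristic.

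The key observation is that $F_1$ coincides with the twice-punctured torus $\partial T \cap E(K_i)$, and hence separates $E(K_i)$ into $E_T(s)$ and $E_H(T)\cup E_{H_i}(T_i)\cup E_{T_i}(s_i)$. For a hypothetical compressing disk $D$, I would place $D$ on one side and minimize its intersections with the separating tori $\partial H$ and $\partial T_i$ using Lemma \ref{essential}, thereby localizing $D$ to a single piece. In each piece, Lemma \ref{essential}(a) or (b) gives that $\partial D$ bounds a disk $O$ on the appropriate boundary torus; the only configuration in which $\partial D$ is essential in $F_1$ is when $O$ contains both of the punctures $\partial s$. In the interior piece $E_T(s)$, the sphere $D\cup O$ bounds a ball $B$ in $T$ that must contain the arc $s$ (by a connectivity argument using $\partial s\subset O$ and $s^\circ$ disjoint from $D$ and from $O^\circ$), forcing $s$ to be boundary-parallel and contradicting Lemma \ref{essential}(b). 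In an exterior piece such as $E_H(T)$, a ball containing the solid torus $T$ would render the core of $T$ null-homotopic in $H$, which is false since this core is the core of the solid torus $H$; the complementary possibility of a ball in $E_H(T)$ with boundary $D\cup O$ is excluded by Lemma \ref{disk in H}, whose statements precisely forbid disks in $E_H(T)$ whose boundaries interact in the requisite way with the disks $T\cap \partial H$. The symmetric analysis applies to $E_{H_i}(T_i)$ and $E_{T_i}(s_i)$.

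For $F_2$ the argument proceeds along the same lines but with extra bookkeeping for the internal seam $\partial_2 X=\partial_2 Y$ and the two pushed collar annuli $O_1^H\subset H$ and $O_1^{H_i}\subset H_i$. A compressing disk may have boundary straddling these features; I would isotope it to minimize intersections with all of $\partial T$, $\partial T_i$, $\partial H$, $O_1^H$, $O_1^{H_i}$, exploiting that each pushed collar cobounds a product region with $O_1\subset\partial H$ to remove outermost-arc intersections, and then reduce to the single-piece analysis already carried out for $F_1$. The principal technical obstacle throughout is the two-puncture case, where the sphere-and-ball argument must be closed using Lemma \ref{disk in H}; this is where the knotted free-tangle structure of $(B_H,B_H\cap\gamma)$ is genuinely used.
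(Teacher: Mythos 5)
Your overall strategy---decompose $E(K_i)$ along $\partial H$, $\partial T$ and $\partial T_i$, minimize the intersection of a hypothetical disk with these surfaces, and reduce to Lemmas \ref{essential} and \ref{disk in H}---is the same as the paper's, and your treatment of compressing disks is broadly in the right direction. But there are genuine gaps. The most serious is the sentence ``boundary-incompressibility, if needed, follows from a parallel argument to the compression case.'' It is needed (these are meridional surfaces with boundary), and it is not parallel: a boundary compressing disk $D$ has $\partial D=a\cup b$ with $a$ an arc in $\partial E(K_i)$ joining the two components of $\partial F_g$, and the position of those two components relative to $\partial H$ drives the whole analysis. For $F_1$ both components lie on the same side of $\partial H$, and the key subcases are $D\subset T$ (contradicting the fact that $s$ is not boundary-parallel in $T$, the half of Lemma \ref{essential}(b) your compression argument never uses) and $D\not\subset T$ (requiring Lemma \ref{disk in H}(b)). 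For $F_2$ the two components of $\partial F_2$ lie on \emph{opposite} sides of $\partial H$, so one must first argue that $a$ meets $\partial H$ in exactly one point and then work through the resulting pattern of arcs of $D\cap\partial H$ (one arc with an end on $a$, plus possibly arcs with both ends on $b$), splitting into the cases $D\subset T\cup T_i$ and $D\subset E_H(T)\cup E_{H_i}(T_i)$. This case analysis is the bulk of the paper's proof and has no counterpart in the compression argument.

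There is also an error in your ``both punctures'' case for the interior piece. If $D\subset E_T(s)$ and $\partial D$ bounds a disk $O\subset\partial T$ containing both points of $\partial s$, the ball bounded by $D\cup O$ does contain $s$, but this does \emph{not} force $s$ to be boundary-parallel: a knotted arc lying in a ball that meets $\partial T$ in a disk need not cobound a disk with an arc of $\partial T$. The configuration is still impossible, but for a different reason: the proof of Lemma \ref{essential}(b) shows, via the intersection with $D_T$, the essentiality of the tangle $(B_T,B_T\cap s)$ and the torus-knot pattern in $R_T$, that every properly embedded disk in $E_T(s)$ with boundary in $\partial T$ has boundary bounding a disk in $\partial T-\partial s$; that is what rules out the both-punctures disk, and it is what you should cite. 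Two smaller points: your Euler-characteristic argument against boundary-parallelism is misstated (a torus does contain subsurfaces with $\chi=-2$, e.g.\ the complement of two open disks; what saves the conclusion is that $\partial F_g$ consists of meridians, which are essential in $\partial E(K_i)$, so a parallelism region would force $F_g$ to be an annulus), and your claim that a ball containing $T$ makes the core of $T$ null-homotopic in $H$ only applies if that ball lies in $H$---the paper avoids this issue for $F_1$ by isotoping the surface into the interior of $H$, so that a compressing disk, meeting $\partial H$ in no closed curves and having boundary disjoint from $\partial H$, must lie entirely in $H$.
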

\begin{proof}
For $g=1, 2$ we assume that $F_g$ is not essential in $E(K_i)$ and prove this leads to a contradiction using innermost curve arguments. We denote generically by $D$ a compressing or a boundary compressing disk for $F_g$ in $E(K_i)$. In case $D$ is a boundary compressing disk then $\partial D=a\cup b$ where $a$ is an arc in $\partial E(K_i)-F_g$, with one end in each component of $\partial F_g$, and $b$ is an arc in $F_g$. We also assume $|D\cap \partial H|$ to be minimal. Consequently, using an innermost curve argument, as in Lemma \ref{essential}, we have that $D$ does not intersect $\partial H$ in simple closed curves.\\

\noindent Suppose $g=1$. By a small isotopy of a neighborhood of $\partial F_1$ into $H$ if necessary, we can assume that $F_1$ is in $H$. If $D$ is a compressing disk for $F_1$ in $E(K_i)$ then $D\subset\; H$, as $D$ cannot intersect $\partial H$ in simple closed curves and $\partial D$ is disjoint from $\partial H$. This is a contradiction to Lemma \ref{essential}, which says $\partial T$ is incompressible in $E_T(s)$ and in $E_H(T)$. Assume now $D$ is a boundary compressing disk of $F_1$ in $E(K_i)$. If $D$ is in $T$ then we have a contradiction to Lemma \ref{essential}(b) for $s$ being essential in $T$. If $D$ is not in $T$ then, by using an innermost curve argument, we can assume that $a$ intersects $\partial H$ at two points and that $D\cap \partial H$ is an arc separating from $D$ a disk $O$ in $H$ with boundary an arc in $\partial H$ and an arc that we can assume in $\partial T$ having ends in $\partial H\cap \partial T$. Hence, $O$ contradicts Lemma \ref{disk in H}(b). Therefore, we have that $F_1$ is essential in $E(K_i)$.\\

\noindent Suppose $g=2$. By a small isotopy of a neighborhood of $\partial F_2$ we can assume that the component of $\partial F_2\cap X$ is in $H$ and that $\partial F_2\cap Y$ is in $H_i$. Suppose $D$ is a compressing disk of $F_2$ in $E(K_i)$. If $D$ is disjoint from $\partial H$ then $D$ is a compressing disk for $X$ or $Y$ in $E(K_i)$, which is a contradiction to Lemma \ref{essential}(a). Then, assume $D$ intersects $\partial H$ at a minimal collection of arcs. Consider an outermost arc $\alpha$ of $D\cap \partial H$ in $D$ and let $O$ be the respective outermost disk, with $O\cap F_2=\beta$ an arc in $X$ or in $Y$. Without loss of generality, suppose $\beta$ is in $X$. If $\alpha$ or $\beta$ does not co-bound a disk in $\partial H$ or $X$, respectively, with $\partial_2 X$ we have a contradiction to Lemma \ref{disk in H}(b). Otherwise, $\partial O$ bounds a disk $O'$ in $\partial H\cup \partial T$ and using the ball bounded by $O\cup O'$ we can isotope $D$ reducing $|D\cap \partial H|$ which is a contradiction to its minimality.\\
Suppose now that $D$ is a boundary compressing disk for $F_2$ in $E(K_i)$. As the two components of $\partial F_2$ are in opposite sides of $\partial H$ by an innermost curve argument we can prove that $a$ intersects $\partial H$ at a single point. Hence, $D\cap \partial H$ is an arc with one end in $a$ and one end in $b$ and, possibly, arcs with both ends in $b$, as in Figure \ref{g2}.\\
\begin{figure}[htbp]

\labellist
\hair 2pt
\scriptsize

\pinlabel $a$ at 150 1
\pinlabel $D$ at 130 18
\pinlabel $b$ at 125 45

\endlabellist

\centering
\includegraphics{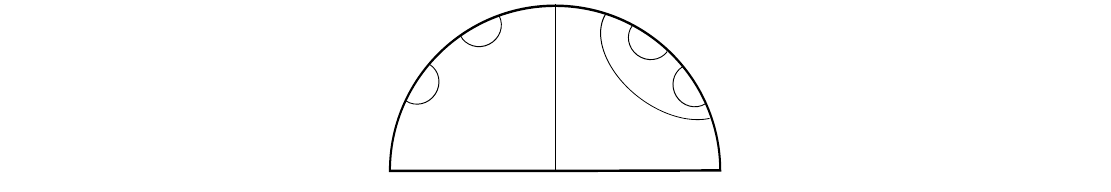}
\caption{: Arcs of $D\cap \partial H$ when $D$ is a boundary compressing disk of $F_2$.}
\label{g2}
\end{figure}

\noindent If $D$ is in $T\cup T_i$ then the arcs of $D\cap \partial H$ with both ends in $b$ are in the annulus $O_2$. Hence, each arc of $D\cap \partial H$ with ends in $b$ co-bounds a disk in $O_2$ with $\partial_2 X$. Consider an outermost of such arcs in $O_2$, and the respective outermost disk $O$. By cutting and pasting $D$ along $O$ we contradict Lemma \ref{essential}(b) or we can reduce $|D\cap \partial H|$ contradicting its minimality. Therefore, in this case, $D\cap \partial H$ is an arc with an end in $a$. This arc cuts $D$ into two disks, one in $T$ and the other in $T_i$, contradicting Lemma \ref{essential}(b). If $D$ is in $E_H(T)\cup E_{H_i}(T_i)$ we consider an outermost arc $\alpha$ between the arcs of $D\cap \partial H$ in $D$ and the respective outermost disk, also denoted by $O$. If the arc $\beta$, that is $\partial O\cap F_2$, co-bounds a disk in $F_2$ with $F_2\cap \partial H$, using an argument as before, we can reduce $|D\cap \partial H|$ contradicting its minimality. Otherwise, the disk $O$ is in contradiction to Lemma \ref{disk in H}(b). Hence, $D\cap \partial H$ is only an arc with an end in $a$, and the disk separated by this arc in $D$ is also in contradiction to Lemma \ref{disk in H}(b). Consequently, $F_2$ is essential in $E(K_i)$.
\end{proof}

To prove the surfaces $F_g$, $g\geq 3$, are essential in the complement of the knots $K_i$, we use branched surface theory.  First, we start by revising the definitions and result relevant to this paper from Oertel's work in \cite{Oertel2}, and also Floyd and Oertel's work in \cite{Floyd-Oertel}.\\
A \textit{branched surface} $B$ with generic branched locus is a compact space locally modeled on Figure \ref{branchedmodel}(a). Hence, a union of finitely many compact smooth surfaces in a $3$-manifold $M$, glued together to form a compact subspace of $M$ respecting the local model, is a branched surface. We denote by $N=N(B)$ a fibered regular neighborhood of $B$ (embedded) in $M$, locally modelled on Figure \ref{branchedmodel}(b). The boundary of $N$ is the union of three compact surfaces $\partial_h N$, $\partial_v N$ and $\partial M\cap \partial N$, where a fiber of $N$ meets $\partial_h N$ transversely at its endpoints and either is disjoint from $\partial_v N$ or meets $\partial_v N$ in a closed interval in its interior. We say that a surface $S$ is \textit{carried} by $B$ if it can be isotoped into $N$ so that it is transverse to the fibers. Furthermore, $S$ is carried by $B$ with \textit{positive weights} if $S$ intersects every fiber of $N$. If we associate a weight $w_i\geq 0$ to each component on the complement of the branch locus in $B$ we say that we have an $\textit{invariant measure}$ provided that the weights satisfy \textit{branch equations} as in Figure \ref{branchedmodel}(c). Given an invariant measure on $B$ we can define a surface carried by $B$, with respect to the number of intersections between the fibers and the surface. We also note that if all weights are positive then the surface carried can be isotoped to be transverse to all fibers of $N$, and hence is carried with positive weights by $B$.\\

\begin{figure}[htbp]

\labellist
\small \hair 0pt
\pinlabel (a) at 3 -5

\pinlabel (b) at 173 -5

\pinlabel (c) at 335 -5

\pinlabel  $\partial \text{ }N$ at 207 23
\pinlabel \tiny $h$ at 205 21

\pinlabel  $\partial \text{ }N$ at 155 40
\pinlabel \tiny $v$ at 153 37

\pinlabel $w_3=w_2+w_1$ at 383 50
\pinlabel $w_1$ at 407 14
\pinlabel $w_2$ at 391 30
\pinlabel $w_3$ at 368 30

\endlabellist
\centering
\includegraphics[width=0.9\textwidth]{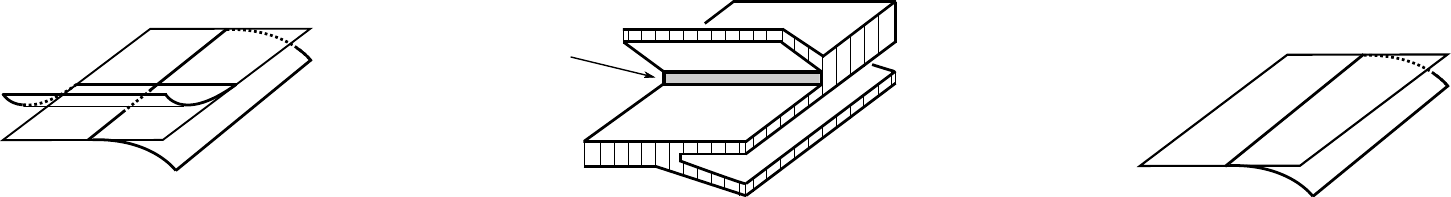}
\caption{: Local model for a branched surface, in (a), and its regular neighborhood, in (b).}
\label{branchedmodel}
\end{figure}

A \textit{disc of contact} is a disc $D$ embedded in $N$ transverse to fibers and with $\partial D\subset \partial_v N$. A \textit{half-disc of contact} is a disc $D$ embedded in $N$ transverse to fibers with $\partial D$ being the union of an arc in $\partial M\cap \partial N$ and an arc in $\partial_v N$. A \textit{monogon} in the closure of $M-N$ is a disc $D$ with $D\cap N=\partial D$ which intersects $\partial_v N$ in a single fiber. (See Figure \ref{monogon}.)\\

\begin{figure}[htbp]

\labellist
\small \hair 0pt
\pinlabel (a) at 3 -7

\scriptsize
\pinlabel monogon at 100 30

\small
\pinlabel (b) at 173 -7

\scriptsize
\pinlabel  \text{disk of} at 240 40

\pinlabel \text{contact} at 240 34

\endlabellist
\centering
\includegraphics{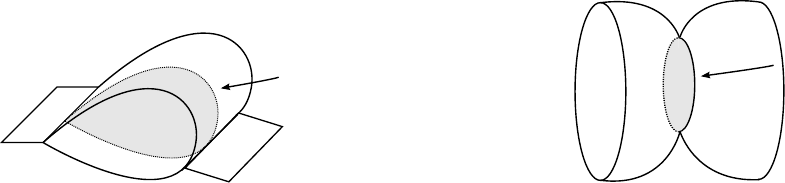}
\caption{: Illustration of a monogon and a disk of contact on a branched surface.}
\label{monogon}
\end{figure}

A branched surface embedded $B$ in $M$ is said \textit{incompressible} if it satisfies the following three properties:
\begin{itemize}
\item[(i)] B has no disk of contact or half-disks of contact;
\item[(ii)] $\partial_h N$ is incompressible and  boundary incompressible in the closure of $M-N$, where a boundary compressing disk is assumed to have boundary defined by an arc in $\partial M$ and an arc in $\partial_h N$;
\item[(iii)] There are no monogons in the closure of $M-N$. 
\end{itemize}

The following theorem proved by Floyd and Oertel in \cite{Floyd-Oertel} let us infer if a surface carried by a branched surface is essential.

\begin{thm}[Floyd and Oertel, \cite{Floyd-Oertel}]\label{Floyd-Oertel}
A surface carried with positive weights by an incompressible branched surface is essential.
\end{thm}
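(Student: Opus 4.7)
The plan is to argue by contradiction. Assume $S$ is carried with positive weights by the incompressible branched surface $B$ but is not essential in $M$, so there exists either a compressing disk or a boundary compressing disk $D$ for $S$, or $S$ has a trivial component. A trivial component of $S$ (a sphere bounding a ball or a disk parallel into $\partial M$) can be handled along the same lines as the disk case, so I focus on producing a contradiction from such a $D$. Isotope $S$ into $N = N(B)$ transverse to fibers, and put $D$ in general position with respect to $\partial N$, so that $D \cap \partial N$ is a properly embedded $1$-manifold in $D$. Among all such disks $D$ in the isotopy class I would choose one minimizing $|D \cap \partial N|$.

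The next step is an innermost-disk and outermost-arc analysis of $D \cap \partial N$. For an innermost simple closed curve of $D \cap \partial N$, the bounded sub-disk $D_0$ of $D$ lies either in $N$ or in the closure of $M - N$. If $D_0 \subset M - N$, then $\partial D_0$ is a curve on $\partial_h N$ or $\partial_v N$; incompressibility of $\partial_h N$ (condition (ii)) or the absence of disks of contact (condition (i)) allow $D_0$ to be removed by isotopy, contradicting minimality. If $D_0 \subset N$, the positive-weight hypothesis ensures that $S$ meets every fiber of $N$, so either $D_0$ can be pushed through fibers onto a sub-disk of $S$ and eliminated, or $D_0$ is itself a disk of contact, contradicting (i). For an outermost arc of $D \cap \partial N$ cutting off a sub-disk $D_0 \subset M - N$ whose second boundary arc lies on $S$ or on $\partial M$, boundary-incompressibility of $\partial_h N$ and the absence of monogons (conditions (ii) and (iii)) rule out the obstructions, again reducing $|D \cap \partial N|$; an outermost arc with $D_0 \subset N$ similarly either produces a half-disk of contact or is fiber-pushed away.

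Once $D \cap \partial N$ has been completely eliminated, $D$ lies entirely on one side. If $D \subset N$, then, using that $S$ transversally intersects every fiber, $D$ itself would be (or extend to) a disk of contact or half-disk of contact, contradicting condition (i). If $D \subset \overline{M - N}$, then $D$ is a compressing or boundary compressing disk for $\partial_h N$, contradicting condition (ii). Either outcome gives the desired contradiction, so $S$ must have been essential. The main obstacle I anticipate is the bookkeeping of configurations: each outermost or innermost piece of $D \cap \partial N$ must be matched with exactly one of the three forbidden objects (disk of contact, half-disk of contact, or monogon) or with a simplification that strictly decreases $|D \cap \partial N|$, and the positive-weight hypothesis is indispensable throughout, as it is precisely what makes the fiber-push operation available inside $N$.
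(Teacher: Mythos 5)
First, note that the paper does not prove this statement: it is quoted from Floyd and Oertel \cite{Floyd-Oertel} and used as a black box in the proof of Lemma \ref{g=3}, so there is no in-paper argument to compare yours against. Your sketch does follow the broad strategy of the original Floyd--Oertel proof: put a compressing or boundary compressing disk $D$ in general position with respect to $N(B)$, then run an innermost-circle/outermost-arc reduction in which each configuration is killed either by one of conditions (i)--(iii) or by a move decreasing $|D\cap\partial N|$.

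That said, there is a genuine gap at exactly the place where the positive-weights hypothesis does its work. A disk of contact has, by definition, boundary in $\partial_v N$ and is transverse to the fibers; an innermost circle of $D\cap \partial N$ bounding a sub-disk $D_0\subset N$ will generically lie in $\partial_h N$, so $D_0$ is not a disk of contact, condition (i) says nothing about it, and ``pushing $D_0$ through fibers onto $S$'' is not an available move until you know that $\partial D_0$ bounds a disk in the horizontal surface it lies on. The mechanism that repairs this --- and the actual content of ``carried with positive weights'' --- is that, because $S$ meets every fiber, the closure of each component of $N-S$ is an $I$-bundle over a surface, so the fundamental group of its horizontal boundary injects into that of the piece; hence any disk in such a piece with boundary in the horizontal boundary has boundary bounding a disk there, which is what permits the reduction and what yields the final contradiction once all of $D$ has been pushed into $N$ (then $\partial D$ bounds a disk in $S$, contradicting that $D$ is a compressing disk; the alternative $D\subset \overline{M-N}$ in your endgame cannot occur, since $\partial D\subset S\subset \operatorname{int} N$). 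Without this $I$-bundle decomposition of $N$ cut along $S$ the case analysis cannot be closed; with it, your outline becomes Floyd and Oertel's proof.
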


We now prove that the remaining surfaces $F_g$ are essential.

\begin{lem}\label{g=3}
The surfaces $F_g$, $g\geq 3$, are essential in the exterior of the knot $K_i$, $i\in \mathbb{N}$.
\end{lem}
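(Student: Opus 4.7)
The plan is to apply the Floyd--Oertel theorem (Theorem \ref{Floyd-Oertel}): I exhibit a single branched surface $B \subset E(K_i)$ that carries every $F_g$, $g\geq 3$, with positive weights, and then verify that $B$ is incompressible. Take the sectors of $B$ to be one copy of $X$, one copy of $Y$, one copy of the annulus $A=\partial N(s_i)\setminus(\partial N(s_i)\cap \partial H_i)$, together with the sub-annuli of $\partial H\cap E(K_i)$ carved out by $\partial X,\partial Y,\partial A$; the branch loci live entirely in $\partial H$ and are arranged to match the gluing pattern in the construction of $F_g$. Assigning weight $g-1$ to the $X$--sector, weight $1$ to the $Y$--sector, weight $g-3$ to the $A$--sector, and the forced (positive) weights to the annular sectors then reproduces $F_g$ and puts a positive weight on every sector, so $F_g$ is carried by $B$ with positive weights.

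The heart of the proof is verifying the three conditions that make $B$ incompressible. First I would handle (i): any hypothetical disc or half-disc of contact $D$ has $\partial D$ lying in $\partial_v N\subset N(\partial H\cup \partial H_i)$, so after a small isotopy $\partial D\subset\partial H\cup\partial H_i$. Making $D$ transverse to $\partial H\cup\partial H_i$ and applying innermost-curve reductions as in Lemma \ref{essential} reduces $D$ to a disk violating Lemma \ref{essential}(a) or Lemma \ref{disk in H}. For (ii), let $D$ be a compressing or boundary-compressing disk of $\partial_h N(B)$ with $|D\cap(\partial H\cup \partial H_i)|$ minimal. Innermost-curve arguments eliminate closed intersection components; an outermost-arc analysis on $D$ then produces outermost disks lying entirely in one of the four pieces $E_H(T),\,E_{H_i}(T_i),\,T,\,T_i$. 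In the exterior pieces, an outermost disk either gives a compression of $\partial T$ or $\partial T_i$ (forbidden by Lemma \ref{essential}(a)) or a disk meeting $T\cap\partial H$ as in Lemma \ref{disk in H}(a) or (b). In the interior pieces $T$ or $T_i$, the outermost disk either gives a boundary-parallelism or boundary-compression forbidden by Lemma \ref{essential}(b), or is cobounded by an arc in $D_T$ (respectively $D_{T_i}$) that lets me reduce $|D\cap(\partial H\cup \partial H_i)|$, contradicting minimality. This is the direct extension of the reduction already carried out for $g=1,2$ in Lemma \ref{g=1,2}.

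Condition (iii) is addressed by the same playbook: a monogon $D$ in the closure of $E(K_i)\setminus N(B)$ has $\partial D$ a fiber in $\partial_v N$ closed up by an arc on $\partial_h N$. Make $D$ transverse to $\partial H\cup \partial H_i$ with minimal intersection and repeat the outermost-arc reductions; each outermost disk falls under Lemma \ref{essential} or Lemma \ref{disk in H}, yielding a contradiction. Once all three conditions are established, Theorem \ref{Floyd-Oertel} gives that every $F_g$, $g\geq 3$, carried with positive weights by $B$, is essential in $E(K_i)$.

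I expect the main obstacle to be the bookkeeping in step (ii): because $F_g$ contains many parallel copies of $X$ and of $A$, the branched surface has many branch loci on $\partial H$, and an outermost arc of $D\cap(\partial H\cup \partial H_i)$ may lie on any of them. The work is to keep track of which sectors an outermost disk abuts and to show that in every configuration the reduction produces exactly one of the disks forbidden by Lemmas \ref{essential} and \ref{disk in H}. This is entirely parallel to the $g=2$ analysis, just applied sector-by-sector and iterated, so no genuinely new topological input is required beyond those lemmas and the Floyd--Oertel theorem.
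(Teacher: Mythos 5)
Your proposal follows essentially the same route as the paper: build a single branched surface from $X$, $Y$, the annulus $A$ around $s_i$, and annular pieces of $\partial H\cap E(K_i)$, check that suitable positive weights reproduce each $F_g$, verify incompressibility of the branched surface by cutting hypothetical compressing disks, boundary-compressing disks, and monogons along $\partial H\cup\partial H_i$ and reducing to Lemmas \ref{essential} and \ref{disk in H}, and then invoke Theorem \ref{Floyd-Oertel}. The paper organizes the verification by identifying the three complementary regions of $N(B)$ with $E_T(s)$, $E_{T_i}(s_i)$, and $E_H(T)\cup E_{H_i}(T_i)$ rather than by the three conditions, but this is the same argument in a different order.
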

\begin{proof}
To prove the statement of this theorem, we construct a branched surface that carries $F_g$, $g\geq 3$, and show that it is incompressible in the exterior of $K_i$, $i\in \mathbb{N}$.\\ 

Let us consider the puntured torus $X$ in $H$, the annulus $A$ in $H_i$, the annulus $O_1$ and the punctured torus $Y$ in $H_i$. Note that the boundaries of $\partial_i X$, $\partial_i Y$ and $\partial_i A$, for $i=1, 2$, are the same. Consider the union $X\cup A\cup Y$ and isotope $\partial_1 Y$, in this union, into the interior of $A\cap H_i$. Now we add $O_1$ to the previous union and denote the resulting space by $B$. We smooth the space $B$ on the intersections of the surfaces $X$, $A$, $Y$ and $O_1$ as follows: the annulus $O_1$ in its intersection with $X\cup A$ is smoothed in the direction of $X$; the punctured torus $Y$ on its boundary $\partial_1 Y$ is smoothed in the direction of $\partial_2 A$, and on its boundary $\partial_2 Y$ is smoothed in the direction of $X$. We keep denoting the resulting topological space by $B$. From the construction, the space $B$ is a branched surface with sections denoted naturally by $X$, $O_1$, $A'$, $A$ and $Y$, as illustrated in Figure 3. We denote a regular neighborhood of $B$ by $N(B)$.\\

\begin{figure}[htbp]
\labellist
\hair 0pt
\small

\pinlabel $X$ at 20 50
\tiny
\pinlabel  $\partial E(K_i)$ at 60 28
\small
\pinlabel $Y$ at 170 50

\pinlabel $A$ at 138 70
\pinlabel $A'$ at 108 38
\tiny
\pinlabel $O_1$ at 86 27.5

\pinlabel $\partial H$ at 70 0

\endlabellist

\centering
\includegraphics{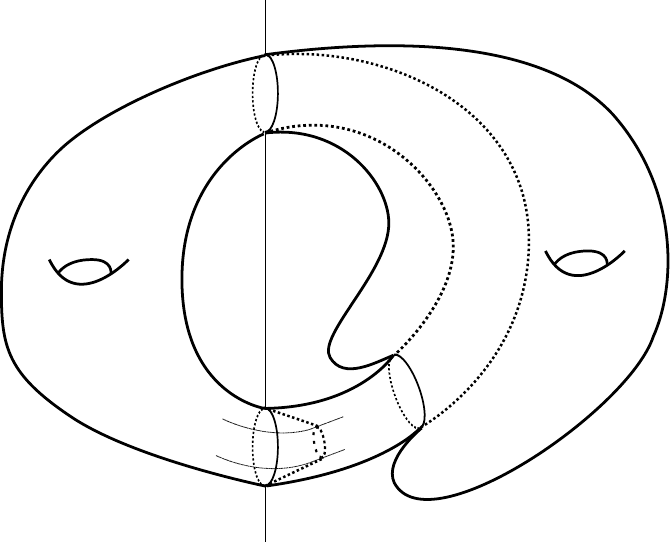}
\caption{: The branched surface $B$ in $E(K_i)$.}
\label{B}
\end{figure}

Given the invariant measure on $B$ defined by $w_X=g-1$, $w_{O_1}=2$, $w_{A'}=g-3$, $w_A=g-2$ and $w_Y=1$, the weights for the sections $X$, $O_1$, $A'$, $A$ and $Y$ respectively, we have that, for each $g\geq 3$, the surface $F_g$ is carried with positive weights by $B$.\\
To prove that $F_g$, $g\geq 3$, is essential in the complement of $E(K_i)$ we show that $B$ is an incompressible branched surface in $E(K_i)$ and use Theorem \ref{Floyd-Oertel}.\\   
The space $N(B)$ decomposes $E(K_i)$ into three components: a component cut from $E(K_i)$ by $X\cup O_1\cup A'\cup A$ that we denote $E_X$; a component cut from $E(K_i)$ by $Y\cup A$ that we denote by $E_Y$; a component cut from $E(K_i)$ by $Y\cup A'\cup X$ that we denote by $E_B$. Note that $\partial E_X\cap \partial N(B)$ is ambient isotopic to $\partial T$ in $E(K_i)$. Hence, from Lemma \ref{essential}(b), we have that $\partial_h N(B)$ is incompressible and boundary incompressible in $E_X$, and also that there are no monogons in $E_X$. Similarly, $\partial E_Y\cap \partial N(B)$ is ambient isotopic to $\partial T_i$ in $E(K_i)$. As $\partial_2 Y$ corresponds to the only component of $\partial_v N(B)$ in $E_Y$, a monogon in $E_Y$ corresponds to the arc $s_i$ being trivial in $T_i$. Therefore, from Lemma \ref{essential}, there are no monogons in $E_Y$, and $\partial_h N(B)$ is incompressible and boundary incompressible in $E_Y$. At last, we consider the component $E_B$, which corresponds to gluing $E_H(T)$ and $E_{H_i}(T_i)$ along their boundaries as before. Suppose there is a compressing disk $D$ for $\partial E_B$ in $E_B$. Note also that $\partial E_B$ is a ambient homotopic to $X\cup Y$ identified along their boundaries. As in the proof of Lemma \ref{g=1,2}, we assume $|D\cap \partial H|$ to be minimal and that the intersection $D\cap \partial H$ contains no simple closed curves. If $D$ is disjoint from $\partial H$ then $\partial D$ is a compressing disk for $X$ in $H$ or for $Y$ in $H_i$, which contradicts Lemma \ref{essential}(a). Then, $D$ intersects $\partial H$ in a collection of arcs, as in Figure \ref{gb3}.

\begin{figure}[htbp]
\labellist
\small \hair 2pt

\scriptsize
\pinlabel $\alpha$ at 54 48
\pinlabel $O$ at 44 52
\pinlabel $D$ at 69 30
\pinlabel $\beta$ at 34 56

\endlabellist

\centering
\includegraphics{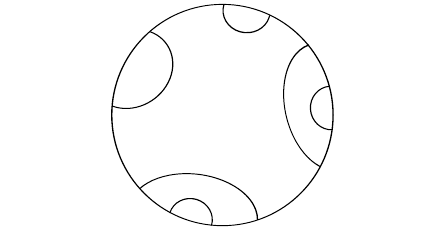}
\caption{: The disk $D$ together with the arcs $D\cap \partial H$.}
\label{gb3}
\end{figure}

Consider an outermost arc in $D$ between the arcs of $D\cap \partial H$ and denote it by $\alpha$. Let $O$ be the outermost disk cut from $D$ by $\alpha$, and let $\partial O=\alpha\cup \beta$ where $\beta$ is an arc in $\partial E_B$. Consequently, the disk $O$ is in $H$ or in $H_i$. If $\beta$ co-bounds a disk in $\partial N(B)$ with an arc in $\partial H$ we can reduce $|D\cap \partial H|$ and contradict its minimality. Then, $O$ is a compressing disk of $\partial E_H(T)$ in $E_H(T)$. As $\partial O$ has boundary defined by the union of an arc in $\partial T$ and an arc in $\partial H$, we have a contradiction to Lemma \ref{disk in H}(b). 
\end{proof}

\begin{proof}[Proof of Theorem \ref{prime}.]
From Lemma \ref{g=1,2} and \ref{g=3} we have that the surfaces $F_g$, $g\in \mathbb{N}$, are essential in the complements of the knots $K_i$, $i\in \mathbb{N}$. Together with Lemma \ref{Ki}, we obtain the statement of the theorem.
\end{proof}

\noindent The proof Corollary \ref{non prime}  now follows naturally.

\begin{proof}[Proof of Corollary \ref{non prime}]
In Theorem \ref{prime} we proved that the knots $K_i$, $i\in \mathbb{N}$, are an infinite collection of prime knots with meridional essential surfaces in their complements for each positive genus and two boundary components. Hence, considering the knots $K_i$ connected sum with some other knot, we have infinitely many knots with meridional essential surfaces of genus $g$ and two boundary components for all $g\geq 0$.
\end{proof}

\section{Appendix}\label{appendix}

In this appendix we give an example of a $2$-string essential free tangle with both strings knotted.\\
For a string $s$ in a ball $B$ we can consider the knot obtained by capping off $s$ along $\partial B$, that is by gluing to $s$ an arc in $\partial B$ along the respective boundaries. We denote this knot by $K(s)$. The string $s$ is said to be \textit{knotted} if the knot $K(s)$ is not trivial.\\
Let $s_1$ be an arc in a ball $B$ such that $K(s_1)$ is a trefoil, and consider also an unknotting tunnel $t$ for $K(s_1)$, as in Figure \ref{FTangles1}.

\begin{figure}[htbp]
\labellist
\small \hair 2pt
\pinlabel (a) at 37 0
\scriptsize
\pinlabel $s_1$ at 47 19
\pinlabel $B$ at 95 18

\small
\pinlabel (b) at 217 0
\scriptsize
\pinlabel $s_1$ at 227 19
\pinlabel $t$ at 227 46
\pinlabel $B$ at 278 18

\endlabellist

\centering
\includegraphics{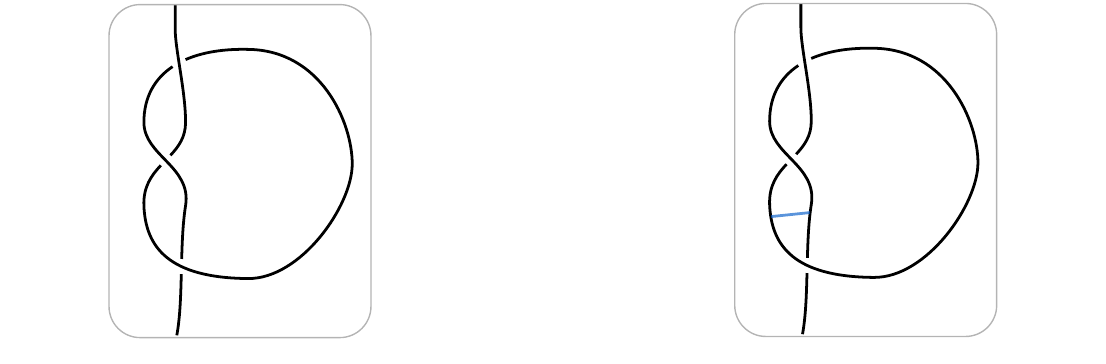}
\caption{: The string $s_1$ when capped off along $\partial B$ is a trefoil knot with an unknotting tunnel $t$.}
\label{FTangles1}
\end{figure}

\noindent If we slide $\partial t$ along $s_1$ into $\partial B$, as illustrated in Figure \ref{FTangles2}(a), we get a new string that we denote by $s_2$, as in Figure \ref{FTangles2}(b).

\begin{figure}[htbp]
\labellist
\small \hair 2pt
\pinlabel (a) at 37 0
\scriptsize
\pinlabel $s_1$ at 47 19
\pinlabel $t$ at 48 47
\pinlabel $B$ at 95 18

\small
\pinlabel (b) at 217 0
\scriptsize
\pinlabel $s_1$ at 227 19
\pinlabel $s_2$ at 227 46
\pinlabel $B$ at 278 18

\endlabellist

\centering
\includegraphics{FTangles2}
\caption{: Construction of a $2$-string essential free tangle, with both strings knotted, from $s_1$ and the unknotting tunnel $t$.}
\label{FTangles2}
\end{figure}

\noindent The knot $K(s_2)$ is the $(3,-4)$-torus knot, and hence knotted. The tangle $(B, s_1\cup s_2)$ is free by construction. In fact, as $t$ is an unknotting tunnel of $K(s_1)$, the complement of $N(s_1)\cup N(t)$ in $B$ is a handlebody. Henceforth, by an ambient isotopy, the complement of $N(s_1)\cup N(s_2)$ is also a handlebody. As the tangle $(B, s_1\cup s_2)$ is free and both strings are knotted then it is necessarily essential. Otherwise, the complement of $N(s_1)\cup N(s_2)$ in $B$ is not a handlebody as it is obtained by gluing two non-trivial knot complements along a disk in their boundaries, which is a contradiction to the tangle $(B, s_1\cup s_2)$ being free.

\section*{Acknowledgement}

The author would like to thank John Luecke, for introductory conversations on branched surface theory, and the referee, for an insightful and educative review.

\end{document}